\newtheorem{thm}{Theorem}
\newtheorem*{theorem*}{Theorem}
\newtheorem*{problem*}{Problem}
\newtheorem{prop}[thm]{Proposition}
\newtheorem{defn}[thm]{Definition}
\theoremstyle{remark}
\newtheorem{rem}[thm]{Remark}
\begin{document}

\title[Complete boundedness of multiple operator integrals]{Complete boundedness of multiple operator integrals}

\author[C. Coine]{Cl\'ement Coine}
\email{clement.coine1@gmail.com}

\address{School of Mathematics and Statistics, Central South University, Changsha 410085,
People’s Republic of China}

\subjclass[2000]
{46L07, 47B49}

\maketitle

\begin{abstract}
In this paper, we characterize the multiple operator integrals mappings which are bounded on the Haagerup tensor product of spaces of compact operators. We show that such maps are automatically completely bounded and prove that this is equivalent to a certain factorization property of the symbol associated to the operator integral mapping. This generalizes a result by Juschenko-Todorov-Turowska on the boundedness of continuous multilinear Schur multipliers.
\end{abstract}

\bibliographystyle{short}

\section{Introduction}

A family $m=(m_{ij})_{i,j \in \mathbb{N}}$ of complex numbers is called a Schur multiplier if for any matrix $[a_{ij}] \in \mathcal{B}(\ell_2)$, the Schur product $T_m(a) = [m_{ij}a_{ij}]$ is the matrix of an element of $\mathcal{B}(\ell_2)$.
Schur multipliers are an important tool in analysis, and play for instance a fundamental role in Perturbation Theory. See below for more informations and references.

There is a well-known characterization of Schur multipliers due to Grothendieck in terms of factorization of the symbol $m$, see \cite[Theorem 5.1]{PisierBook}. It turns out, using the theory of operator spaces, that bounded Schur multipliers are completely bounded and in that case, the norm of $T_m$ is equal to its complete norm. To this day, it is still unknown whether this is true for Schur multipliers defined on the Schatten classes. We refer to \cite{Caspers} for recent developments regarding this question.

In this paper, we are interested in Schur multipliers in the multilinear setting. Effros and Ruan \cite{ER} introduced a Schur product as a multilinear map $T \colon M_n(\mathbb{C}) \times \cdots \times M_n(\mathbb{C}) \rightarrow M_n(\mathbb{C})$ defined on the product of $n$ copies of $M_n(\mathbb{C})$ and characterized the mappings $T$ which extend to a complete contraction on the Haagerup tensor product $M_n(\mathbb{C}) \overset{h}{\otimes} \cdots \overset{h}{\otimes} M_n(\mathbb{C})$. This result was generalized by Juschenko, Todorov and Turowska in \cite{JTT} where they considered continuous multilinear Schur multipliers. They are defined as follows: let $n\in \mathbb{N}$ and let $(\Omega_1, \mu_1), \ldots, (\Omega_n, \mu_n)$ be $\sigma$-finite measure spaces. Let $\phi \in L^{\infty}(\Omega_1 \times\cdots \times \Omega_n)$. If $K_i \in L^2(\Omega_i\times \Omega_{i+1})$, $1\leq i \leq n-1$, we let $\Lambda(\phi)(K_1,\ldots, K_{n-1})$ to be the Hilbert-Schmidt operator with kernel
$$
\int
\phi(t_1,\ldots,t_n)K_1(t_1,t_2)\ldots K_{n-1}(t_{n-1},t_n) \,\text{d}\mu_2(t_2) \ldots \text{d}\mu_{n-1}(t_{n-1}) \in L^2(\Omega_1 \times \Omega_n).
$$
Identifying $L^2(\Omega_i\times \Omega_j)$ with $\mathcal{S}^2(L^2(\Omega_i), L^2(\Omega_j))$, this defines a multilinear mapping
$$
\Lambda(\phi) \colon \mathcal{S}^2(L^2(\Omega_{n-1}), L^2(\Omega_n)) \times \cdots \times \mathcal{S}^2(L^2(\Omega_1), L^2(\Omega_2)) \rightarrow S^2(L^2(\Omega_1), L^2(\Omega_n)).
$$
Using the notion of multilinear module mappings, the authors proved that if $\Lambda(\phi)$ extends to a bounded map on the Haagerup tensor product $\mathcal{S}^{\infty}(L^2(\Omega_{n-1}), L^2(\Omega_n)) \overset{h}{\otimes} \cdots \overset{h}{\otimes} \mathcal{S}^{\infty}(L^2(\Omega_1), L^2(\Omega_2))$ into $\mathcal{S}^{\infty}(L^2(\Omega_1), L^2(\Omega_n))$, the extension is completely bounded \cite[Lemma  3.3]{JTT}. Using this fact, they characterized the functions $\phi$ which give rise to a (completely) bounded $\Lambda(\phi)$ in terms of the extended Haagerup tensor product $L^{\infty}(\Omega_1) \otimes_{eh} \cdots \otimes_{eh} L^{\infty}(\Omega_n)$,  see \cite[Theorem 3.4]{JTT} and the remark following the theorem. We also refer to \cite{Spronk} for more results on the case $n=2$.

Let $A_1, \ldots, A_n$ be normal operators and let $\lambda_{A_1}, \ldots, \lambda_{A_n}$ be scalar-valued spectral measures associated to these operators, that is, $\lambda_{A_i}$ is a finite measure on the Borel subsets of $\sigma(A_i)$ such that $\lambda_{A_i}$  and $E^{A_i}$, the spectral measure of $A_i$, have the same sets of measure $0$. For $\phi \in L^{\infty}(\lambda_{A_1} \times \cdots \times \lambda_{A_n})$ and $X_1, \ldots, X_{n-1} \in \mathcal{S}^2(\mathcal{H})$, we formally define a multiple operator integral by
\begin{align*}
& \left[\Gamma^{A_1,  \ldots, A_n}(\phi)\right](X_1, \ldots, X_{n-1})\\
& \ \ \ \ \ \ \ \ = \int_{\sigma(A_1)\times \cdots \times \sigma(A_n)} \phi(s_1, \ldots, s_n)\,
\text{d}E^{A_1}(s_1)\, X_1\, \text{d}E^{A_2}(s_2) \ldots X_{n-1} \, \text{d}E^{A_n}(s_n).
\end{align*}
The theory of double operator integral (case $n=2$) was developed by Birman and Solomyak in a series of three papers \cite{BS1, BS2, BS3} and was then generalized to the case of multiple operator integrals \cite{Pav, Stek}. They play a prominent role in operator theory, especially in perturbation theory where they are a fundamental tool in the study of differentiability of operator functions. See \cite{CLSS, Coine, LMS, Peller2006} where Fr\'echet and G\^ateaux-differentiability of the mapping $f \mapsto f(A)$ are studied in the Schatten norms.

The definition of multiple operator integrals we will use in this paper is the one given in \cite{CLS} and which is based on the construction of Pavlov \cite{Pav}. See \cite{Peller2006,PSS-SSF} for other constructions of multiple operator integrals. The advantages of this definition is that any bounded Borel function is integrable and the property of $w^*$-continuity of the mapping $\phi \mapsto \Gamma^{A_1,  \ldots, A_n}(\phi)$ which allows to prove certain identities by simply checking them for functions with separated variables, see \cite{CLS, CLSS} and the proof of Theorem $\ref{CBMOI}$.

In this paper, we prove that a similar characterization than that of continuous multilinear Schur multipliers \cite{JTT} holds in the setting of multiple operator integrals. Namely, we prove that if a multiple operator integral $\Gamma^{A_1, \ldots, A_n}$ extends to a bounded mapping on the Haagerup tensor product $\mathcal{S}^{\infty}(\mathcal{H}) \overset{h}{\otimes} \cdots \overset{h}{\otimes} \mathcal{S}^{\infty}(\mathcal{H})$ then the extension is completely bounded and that we have such an extension if and only if $\phi$ has the following factorization: there exist separable Hilbert spaces $H_1,  \ldots, H_{n-1}$, $a_1\in L^{\infty}(\lambda_{A_1} ; H_1), a_n \in L^{\infty}(\lambda_{A_n} ; H_{n-1})$ and $a_i\in L_{\sigma}^{\infty}(\lambda_{A_i} ; \mathcal{B}(H_i, H_{i-1})), 2\leq i \leq n-1$
such that
\begin{equation*}
\phi(t_1,\ldots,t_n)= \left\langle a_1(t_1), [a_2(t_2)\ldots a_{n-1}(t_{n-1})](a_n(t_n)) \right\rangle.
\end{equation*}
Our proof rests on several properties of the Haagerup tensor product (Section $\ref{Haagerup}$) and the connection between multiple operator integrals and continuous multilinear Schur multipliers that we will present in Section $\ref{MOI}$.

\section{Preliminaries}

\subsection{Operator spaces and the Haagerup tensor product}\label{Haagerup}

We refer to \cite{PisierCB} and \cite{Ruan} for the theory of operator spaces. If $E \subset \mathcal{B}(H)$ and $F \subset \mathcal{B}(K)$ are two operator spaces, we denote by $CB(E,F)$ the Banach space of completely bounded maps from $E$ into $F$ equipped with the c.b. norm. If $\mathcal{H}$ is a Hilbert space, we will denote by $\mathcal{H}_c = \mathcal{B}(\mathbb{C}, \mathcal{H})$ its column structure.\\
In this subsection, we will recall a few properties of the Haagerup tensor product $E_1 \overset{h}{\otimes} \cdots \overset{h}{\otimes} E_N$ of $N$ operator spaces $E_1, \ldots, E_N$. See \cite[Chapter 5]{PisierCB} for a definition. The first property is the factorization of multilinear maps.

\begin{thm}\label{DualityH}
Let $E_1, \ldots, E_n$ be operator spaces and let $H_0$ and $H_n$ be Hilbert spaces. A linear mapping 
$u \colon E_1 \overset{h}{\otimes} \cdots \overset{h}{\otimes} E_n \rightarrow \mathcal{B}(H_n, H_0)$
is completely bounded if and only if there exist Hilbert spaces $H_1, \ldots, H_{n-1}$ and completely bounded mappings $\phi_i \colon E_i \rightarrow \mathcal{B}(H_i, H_{i-1}), 1\leq i \leq n,$ such that
$$u(x_1 \otimes \cdots \otimes x_n) = \phi_1(x_1) \ldots \phi_n(x_n).$$
In this case we can choose $\phi_i, 1\leq i \leq n$, such that
$$\|u\|_{\text{cb}} = \|\phi_1\|_{\text{cb}} \cdots \|\phi_n\|_{\text{cb}}.$$
\end{thm}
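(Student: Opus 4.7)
The statement is the multilinear Christensen--Effros--Sinclair representation theorem in the form presented in \cite[Chapter 5]{PisierCB}. My plan is to prove the ``if'' implication directly from functoriality of the Haagerup tensor product together with the fact that iterated operator composition is completely contractive on Haagerup products, and to reduce the ``only if'' implication to the bilinear case by induction on $n$, using associativity of $\overset{h}{\otimes}$.

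For the ``if'' direction, assume completely bounded maps $\phi_i \colon E_i \to \mathcal{B}(H_i, H_{i-1})$ are given. Since the Haagerup tensor product is functorial with respect to completely bounded maps, $\phi_1 \overset{h}{\otimes} \cdots \overset{h}{\otimes} \phi_n$ is a completely bounded map from $E_1 \overset{h}{\otimes} \cdots \overset{h}{\otimes} E_n$ into $\mathcal{B}(H_1,H_0) \overset{h}{\otimes} \cdots \overset{h}{\otimes} \mathcal{B}(H_n,H_{n-1})$ of cb norm at most $\prod_i \|\phi_i\|_{\text{cb}}$. Composing with the iterated multiplication, which extends to a complete contraction $\mathcal{B}(H_1,H_0) \overset{h}{\otimes} \cdots \overset{h}{\otimes} \mathcal{B}(H_n,H_{n-1}) \to \mathcal{B}(H_n, H_0)$ by a standard property of $\overset{h}{\otimes}$, produces the required $u$ with $\|u\|_{\text{cb}} \leq \prod_i \|\phi_i\|_{\text{cb}}$.

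For the ``only if'' direction, I induct on $n$. The case $n=1$ is trivial: take $\phi_1 = u$. For $n=2$, the result is the classical Christensen--Effros--Sinclair representation theorem: given a completely bounded bilinear $u$, extend it via injectivity of $\overset{h}{\otimes}$ to a CB bilinear map on $\mathcal{B}(\widetilde H_1)\overset{h}{\otimes} \mathcal{B}(\widetilde H_2)$ (where $\widetilde H_i$ are ambient Hilbert spaces for the operator spaces $E_i$), apply a Stinespring-type dilation, and extract the intermediate Hilbert space $H_1$ together with the two component maps $\phi_1$ and $\phi_2$ with matching cb norms. For $n \geq 3$, use the completely isometric associativity $E_1 \overset{h}{\otimes} \cdots \overset{h}{\otimes} E_n \cong E_1 \overset{h}{\otimes} (E_2 \overset{h}{\otimes} \cdots \overset{h}{\otimes} E_n)$: applying the bilinear case yields a Hilbert space $K$, a CB map $\phi_1 \colon E_1 \to \mathcal{B}(K, H_0)$ and a CB map $v \colon E_2 \overset{h}{\otimes} \cdots \overset{h}{\otimes} E_n \to \mathcal{B}(H_n, K)$ satisfying $u(x_1 \otimes y) = \phi_1(x_1) v(y)$ and $\|u\|_{\text{cb}} = \|\phi_1\|_{\text{cb}} \|v\|_{\text{cb}}$. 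The induction hypothesis applied to $v$, with $H_1 := K$, then supplies $\phi_2, \ldots, \phi_n$ and completes the factorization with the multiplicative norm identity.

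The main obstacle is the bilinear base case. Producing the intermediate Hilbert space with the precise equality $\|u\|_{\text{cb}} = \|\phi_1\|_{\text{cb}} \|\phi_2\|_{\text{cb}}$ requires the Stinespring dilation machinery together with a rescaling argument that absorbs the two factors symmetrically, and injectivity of the Haagerup tensor product is the key auxiliary ingredient justifying the extension step. Once the bilinear representation is in place, the inductive passage to arbitrary $n$ is purely formal, as is the matching of norms.
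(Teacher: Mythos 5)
The paper does not prove this statement at all: Theorem \ref{DualityH} is recalled as a known result (the multilinear Christensen--Effros--Sinclair factorization theorem) with a pointer to \cite[Chapter 5]{PisierCB}, so there is no in-paper argument to compare yours against. Your outline is the standard proof of that cited result and is correct in structure: the ``if'' direction via functoriality of $\overset{h}{\otimes}$ plus complete contractivity of iterated multiplication is right, and the reduction of the ``only if'' direction to the bilinear case via associativity, with the telescoping norm identity, is exactly how the general case is derived in the literature. Two remarks. First, your phrase ``extend it via injectivity of $\overset{h}{\otimes}$'' conflates two ingredients: injectivity of the Haagerup tensor product only gives that $E_1 \overset{h}{\otimes} E_2$ sits completely isometrically inside $\mathcal{B}(\widetilde H_1) \overset{h}{\otimes} \mathcal{B}(\widetilde H_2)$; the actual extension of $u$ with preserved cb-norm requires in addition the Arveson--Wittstock extension theorem, i.e.\ injectivity of the target $\mathcal{B}(H_n,H_0)$ as an operator space. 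Second, your treatment of the bilinear base case is itself an appeal to the (C$^*$-algebraic) Christensen--Effros--Sinclair representation, which is where essentially all the analytic content lives; since the paper treats the entire theorem as a black box, invoking that base case as a black box is a proportionate level of detail, but be aware that you have not proved it, only reduced to it.
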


\begin{rem}\label{DualH}
When $H_0=H_n=\mathbb{C}$ we can reformulate as follows: a linear functional $u \colon E_1 \overset{h}{\otimes} \cdots \overset{h}{\otimes} E_n \rightarrow \mathbb{C}$ is bounded (and therefore completely bounded) if and only if there exist Hilbert spaces $H_1, \ldots, H_{n-1}$, $\alpha_1 \colon E_1 \rightarrow (H_c)^*$ linear, $\alpha_i \colon E_i \rightarrow \mathcal{B}(H_i, H_{i-1}), 2\leq i\leq n-1$ and $\alpha_n \colon E_n \rightarrow (H_{n-1})_c$ antilinear such that the $\alpha_j$ are completely bounded and
$$u(x_1, \ldots, x_n) = \left\langle \alpha_1(x_1), [\alpha_2(x_2) \ldots \alpha_{n-1}(x_{n-1})]\alpha_n(x_n) \right\rangle.$$
\end{rem}

Recall that a map $s \colon X \rightarrow Y$ between two Banach spaces is called a quotient map if the injective map $\hat{s} \colon X/\ker(s) \rightarrow Y$ induced by $s$ is a surjective isometry. If $E_1 \subset E_2$ are operator spaces, we equip $E_2/E_1$ with the quotient operator space structure (see e.g. \cite[Section 2.4]{PisierCB}).
When $E$ and $F$ are operator spaces, a quotient map $u \colon E \rightarrow F$ is said to be a \textit{complete metric surjection} if the associated mapping $\hat{u} \colon E/\ker(u) \rightarrow F$ is a completely isometric isomorphism.

\begin{prop}\label{Injectivity} Let $E_1, E_2, F_1, F_2$ be operator spaces.
\begin{enumerate}
\item[(i)] If $q_i \colon E_i \rightarrow F_i$ is completely bounded, then  $q_1 \otimes q_2 \colon E_1 \otimes E_2 \rightarrow F_1 \overset{h}{\otimes} F_2$
defined by $(q_1 \otimes q_2)(e_1 \otimes e_2) = q_1(e_1) \otimes q_2(e_2)$
extends to a completely bounded map
$$q_1 \otimes q_2 \colon E_1 \overset{h}{\otimes} E_2 \rightarrow F_1 \overset{h}{\otimes} F_2.$$
\item[(ii)] If $E_i \subset F_i$ completely isometrically, then $E_1 \overset{h}{\otimes} E_2 \subset F_1 \overset{h}{\otimes} F_2$ completely isometrically.
\item[(iii)] If $q_i \colon E_i \rightarrow F_i$ is a complete metric surjection, then $q_1 \otimes q_2 \colon E_1 \overset{h}{\otimes} E_2 \rightarrow F_1 \overset{h}{\otimes} F_2$ is also one.
\item[(iv)] If $E_i \subset F_i$ are subspaces, let $p_i \colon F_i \rightarrow F_i/E_i$ be the canonical mappings. Then, the induced map $p_1 \otimes p_2 \colon F_1  \overset{h}{\otimes} F_2 \rightarrow F_1/E_1 \overset{h}{\otimes} F_2/E_2$ satisfies
$$\ker (p_1\otimes p_2) = \overline{E_1 \otimes F_2 + F_1 \otimes E_2}.$$
\end{enumerate}
The second property is called the injectivity and the third one the projectivity of the Haagerup tensor product.
\end{prop}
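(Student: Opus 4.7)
The key tool is the standard row/column description of the Haagerup norm: for $u \in E_1 \otimes E_2$,
$$\|u\|_h = \inf\bigl\{\|R\|_{M_{1,N}(E_1)} \|C\|_{M_{N,1}(E_2)} : u = R \odot C\bigr\},$$
where $R = [R_1, \ldots, R_N]$, $C = [C_1, \ldots, C_N]^t$, and $R \odot C := \sum_i R_i \otimes C_i$, with an analogous formula using tensor-valued rows and columns computing the norm on $M_p(E_1 \overset{h}{\otimes} E_2)$. For (i), given such a representation of $u$, applying $q_1$ entrywise to $R$ and $q_2$ to $C$ yields a representation of $(q_1 \otimes q_2)(u)$ whose row and column norms are bounded by $\|q_1\|_{\mathrm{cb}}\|R\|$ and $\|q_2\|_{\mathrm{cb}}\|C\|$, since complete boundedness of $q_i$ is exactly the statement that the induced maps on $M_{1,N}(E_i)$ and $M_{N,1}(E_i)$ are controlled by $\|q_i\|_{\mathrm{cb}}$. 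Taking infimum, and repeating at each matrix level, yields $\|q_1 \otimes q_2\|_{\mathrm{cb}} \leq \|q_1\|_{\mathrm{cb}}\|q_2\|_{\mathrm{cb}}$. For (ii), (i) applied to the inclusions (which are complete isometries) yields a complete contraction $E_1 \overset{h}{\otimes} E_2 \to F_1 \overset{h}{\otimes} F_2$. For the reverse inequality I would invoke Theorem \ref{DualityH}: any completely bounded map $\phi$ from $E_1 \overset{h}{\otimes} E_2$ into a space of bounded operators factorizes through completely bounded maps on $E_1$ and $E_2$, and by the Arveson--Wittstock extension theorem these factors extend to completely bounded maps on $F_1$ and $F_2$ of the same cb-norm, producing a cb extension of $\phi$ to $F_1 \overset{h}{\otimes} F_2$. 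Since matrix norms on the Haagerup tensor product are recovered by testing against such completely bounded maps, the contraction upgrades to a complete isometry.

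For (iii), take $v \in F_1 \overset{h}{\otimes} F_2$ with $\|v\|_h < 1$ and write $v = R \odot C$ with $\|R\|\|C\| < 1$; after rescaling $R \mapsto \lambda R$, $C \mapsto \lambda^{-1} C$ we may assume $\|R\|, \|C\| < 1$. Since $q_i$ is a complete metric surjection, the induced maps $M_{1,N}(E_1) \to M_{1,N}(F_1)$ and $M_{N,1}(E_2) \to M_{N,1}(F_2)$ are metric surjections (a direct consequence of the definition of the quotient operator space structure), so one lifts $R$ and $C$ to $R' \in M_{1,N}(E_1)$ and $C' \in M_{N,1}(E_2)$ of norm $< 1$. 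Then $u := R' \odot C' \in E_1 \otimes E_2$ satisfies $\|u\|_h < 1$ and $(q_1 \otimes q_2)(u) = v$. Running the same argument with the matrix row/column formula in place of the scalar one shows that $q_1 \otimes q_2$ is a complete metric surjection.

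For (iv), set $N := \overline{E_1 \otimes F_2 + F_1 \otimes E_2}$; the inclusion $N \subset \ker(p_1 \otimes p_2)$ is immediate from $p_i|_{E_i} = 0$ and continuity. By (iii), $p_1 \otimes p_2$ is a complete metric surjection, and since it vanishes on $N$ it factors as a complete contraction $\tilde p \colon (F_1 \overset{h}{\otimes} F_2)/N \to F_1/E_1 \overset{h}{\otimes} F_2/E_2$. I would construct an inverse $\psi$: on the algebraic tensor product $F_1/E_1 \otimes F_2/E_2$ set $\psi(\bar a \otimes \bar b) := [a \otimes b]_N$, which is well-defined because any two lifts differ by an element of $E_1 \otimes F_2 + F_1 \otimes E_2 \subset N$. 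To bound $\psi$, given $w$ in the algebraic tensor product with $\|w\|_h < 1$ in $F_1/E_1 \overset{h}{\otimes} F_2/E_2$, the lifting construction from (iii) produces $w' \in F_1 \overset{h}{\otimes} F_2$ with $\|w'\|_h < 1$ and $(p_1 \otimes p_2)(w') = w$; checking on elementary tensors yields $\psi(w) = [w']_N$, whence $\|\psi(w)\| < 1$ in the quotient. Density extends $\psi$ to a contraction on $F_1/E_1 \overset{h}{\otimes} F_2/E_2$; since $\tilde p$ and $\psi$ are mutually inverse on elementary tensors they remain so after completion, so $\tilde p$ is injective and $\ker(p_1 \otimes p_2) = N$. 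The main obstacle is this final step: one must simultaneously secure the algebraic well-definedness of $\psi$ (that the ambiguity in lifting across $p_i$ is absorbed into $N$) and the norm control for $\psi$ via the lifting of (iii); the other three statements are relatively routine consequences of the row/column formula together with the Arveson--Wittstock theorem.
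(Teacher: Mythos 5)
Your argument is correct, but for part (iv) --- the only part the paper actually proves, (i)--(iii) being quoted from Effros--Ruan and Pisier --- you take a genuinely different route. You argue on the primal side: using projectivity (iii) you factor $p_1\otimes p_2$ through $(F_1\overset{h}{\otimes}F_2)/N$ and build an explicit inverse $\psi$, whose well-definedness comes from the algebraic identity $a\otimes b-a'\otimes b'=(a-a')\otimes b+a'\otimes(b-b')$ and whose contractivity comes from lifting a near-optimal row/column representation; note that once $\psi$ is known to be well defined you may compute $\psi(w)$ on the lifted representation itself, so $\psi(w)=[w']_N$ needs no separate algebraic exactness lemma. The paper instead dualizes: it takes a functional $\sigma$ annihilating $N$, invokes the factorization $\sigma(x,y)=\langle\alpha(x),\beta(y)\rangle$ for bounded functionals on the Haagerup tensor product (Remark \ref{DualH}), compresses to the closures of the ranges so that $\alpha,\beta$ may be assumed to have dense range, deduces $\alpha|_{E_1}=\beta|_{E_2}=0$ and hence that $\sigma$ factors through $p_1\otimes p_2$, giving $N^{\perp}\subset\ker(p_1\otimes p_2)^{\perp}$ and concluding by Hahn--Banach. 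The paper's route is shorter given Remark \ref{DualH} and avoids any lifting; yours uses only (iii) and in fact yields the stronger conclusion that $\tilde p$ is an isometric isomorphism $(F_1\overset{h}{\otimes}F_2)/N\cong F_1/E_1\overset{h}{\otimes}F_2/E_2$, not merely the kernel identification. One small technical elision on your side: in (iii) a general element of the \emph{completed} Haagerup tensor product need not admit a finite representation $R\odot C$, so you should either use infinite rows and columns or first establish surjectivity of the open unit ball onto a dense subset of the open unit ball (via the algebraic tensor product) and then invoke the standard completeness iteration; this is routine and does not affect the argument.
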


\begin{proof} We refer to \cite[Proposition 9.2.5]{Ruan} for the proof of $(i)$ and to \cite[Corollary 5.7]{PisierCB} for the proof of $(ii)$ and $(iii)$.

Let us prove $(iv)$.
Write $N = \overline{E_1 \otimes F_2 + F_1 \otimes E_2}$. Note that the inclusion $N \subset \ker(p_1 \otimes p_2).$ is clear. Therefore, to show the result, it is enough to show that
$$N^{\perp} \subset \ker (p_1 \otimes p_2)^{\perp}.$$
Let $\sigma \colon F_1 \overset{h}{\otimes} F_2 \rightarrow \mathbb{C}$ be such that $\sigma_{|N}=0$. By Remark $\ref{DualH}$, there exist a Hilbert space $H$, $\alpha \colon X \rightarrow (H_c)^*$ linear and $\beta \colon Y \rightarrow H_c$ antilinear, $\alpha$ and $\beta$ completely bounded such that
$$\sigma(x, y) = \left\langle \alpha(x), \beta(y) \right\rangle, x\in F_1, y\in F_2.$$
Let $K = \overline{\alpha(F_1)}$ and denote by $P_K$ the orthogonal projection onto $K$. Then we have, for any $x$ and $y$,
$$\sigma(x,y) = \left\langle P_K \alpha(x), \beta(y) \right\rangle = \left\langle P_K \alpha(x), P_K \beta(y) \right\rangle.$$
Thus, by changing $\alpha$ into $P_K \alpha$ and $\beta$ into $P_K \beta$, we can assume that $\alpha$ has a dense range. Similarly, setting $L = \overline{\beta(F_2)}$ and considering $P_L$, we may assume that $\beta$ has a dense range.

By assumption, for any $e\in E_2$ and any $x\in E_1$, we have
$$0= \sigma(x,e) = \left\langle \alpha(x), \beta(e) \right\rangle.$$
This implies that $\beta_{|E_2}=0$. Similarly, we show that $\alpha_{|E_1}=0$. Thus, we can consider
$$\widehat{\alpha} \colon F_1/E_1 \rightarrow H \ \text{and} \ \ \widehat{\beta} \colon F_2/E_2 \rightarrow H$$
such that $\alpha = \widehat{\alpha} \circ p_1$ and $\beta = \widehat{\beta} \circ p_2$ and where $F_1/E_1$ and $F_2/E_2$ are equipped with their quotient structure. Now, define $\widehat{\sigma} \colon F_1/E_1 \overset{h}{\otimes} F_2/E_2 \rightarrow \mathbb{C}$ by
$$\widehat{\sigma}(s,t) = \left\langle \widehat{\alpha}(s), \widehat{\beta}(t) \right\rangle.$$
Then $\sigma = \widehat{\sigma} \circ (p_1 \otimes p_2)$, so that $\sigma \in \ker(p_1\otimes p_2)^{\perp}$.
\end{proof}

Finally, we recall the following \cite[Proposition 9.3.3]{Ruan} which will be important in the last section.

\begin{prop}\label{SimplOP}
Let $E$ be an operator space and let $\mathcal{H}$ and $\mathcal{K}$ be Hilbert spaces. For any $T\in CB(E, \mathcal{B}(\mathcal{H}, \mathcal{K}))$ we define a mapping
$\sigma_T \colon \mathcal{K}^* \otimes E \otimes \mathcal{H} \rightarrow \mathbb{C}$
by setting
$$\sigma_T(k^* \otimes e \otimes h) = \left\langle T(e)h, k \right\rangle.$$
Then, the mapping $T \mapsto \sigma_T$ induces a complete isometry
$$CB(E, \mathcal{B}(\mathcal{H}, \mathcal{K})) = \left( (\mathcal{K}_c)^* \overset{h}{\otimes} E \overset{h}{\otimes} \mathcal{H}_c \right)^*.$$
\end{prop}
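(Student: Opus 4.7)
The plan is to realize the isomorphism $T \mapsto \sigma_T$ by combining two ingredients: (a) the canonical complete isometries $\mathcal{H}_c = \mathcal{B}(\mathbb{C},\mathcal{H})$ and $(\mathcal{K}_c)^* \cong \mathcal{B}(\mathcal{K},\mathbb{C})$ coming from the definition of the column structure; (b) the Christensen--Sinclair factorization for the Haagerup tensor product recorded as Theorem \ref{DualityH} and its specialization Remark \ref{DualH}. These two together force the natural trilinear pairing $(k^*,e,h) \mapsto \langle T(e)h,k\rangle$ to correspond bijectively and isometrically to CB maps into $\mathcal{B}(\mathcal{H},\mathcal{K})$.

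First I would check that $\sigma_T$ extends to a bounded functional on $(\mathcal{K}_c)^* \overset{h}{\otimes} E \overset{h}{\otimes} \mathcal{H}_c$ with $\|\sigma_T\| \leq \|T\|_{\mathrm{cb}}$. Writing $\iota_{\mathcal{K}} \colon (\mathcal{K}_c)^* \to \mathcal{B}(\mathcal{K},\mathbb{C})$ and $\iota_{\mathcal{H}} \colon \mathcal{H}_c \to \mathcal{B}(\mathbb{C},\mathcal{H})$ for the canonical complete isometries, the trilinear form $(k^*,e,h)\mapsto \iota_{\mathcal{K}}(k^*)\cdot T(e)\cdot \iota_{\mathcal{H}}(h)$ is exactly the product of three completely bounded maps valued in $\mathcal{B}(\mathcal{K},\mathbb{C})$, $\mathcal{B}(\mathcal{H},\mathcal{K})$ and $\mathcal{B}(\mathbb{C},\mathcal{H})$. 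Theorem \ref{DualityH} then guarantees that the associated linear map on the triple Haagerup tensor product is CB with norm at most $\|\iota_{\mathcal{K}}\|_{\mathrm{cb}}\,\|T\|_{\mathrm{cb}}\,\|\iota_{\mathcal{H}}\|_{\mathrm{cb}} = \|T\|_{\mathrm{cb}}$.

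For the converse, given a bounded (equivalently completely bounded, since the target is $\mathbb{C}$) functional $\sigma$ on $(\mathcal{K}_c)^* \overset{h}{\otimes} E \overset{h}{\otimes} \mathcal{H}_c$, I would apply Theorem \ref{DualityH} to obtain Hilbert spaces $H_1,H_2$ and CB maps $\phi_1 \colon (\mathcal{K}_c)^* \to \mathcal{B}(H_1,\mathbb{C})$, $\phi_2 \colon E \to \mathcal{B}(H_2,H_1)$, $\phi_3 \colon \mathcal{H}_c \to \mathcal{B}(\mathbb{C},H_2)$ with $\|\phi_1\|_{\mathrm{cb}}\|\phi_2\|_{\mathrm{cb}}\|\phi_3\|_{\mathrm{cb}} = \|\sigma\|$. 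The CB map $\phi_1$ out of $(\mathcal{K}_c)^*$ into the row space $\mathcal{B}(H_1,\mathbb{C})$ is by the standard duality of column/row structures the same data as an operator $U\in \mathcal{B}(H_1,\mathcal{K})$, and likewise $\phi_3$ out of $\mathcal{H}_c$ into the column space $\mathcal{B}(\mathbb{C},H_2) = (H_2)_c$ corresponds to an operator $V\in \mathcal{B}(\mathcal{H},H_2)$, with matching norms. Set $T(e) := U\,\phi_2(e)\,V \in \mathcal{B}(\mathcal{H},\mathcal{K})$; then $T$ is CB with $\|T\|_{\mathrm{cb}} \leq \|\sigma\|$, and by construction $\sigma_T = \sigma$. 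This establishes the isometric isomorphism.

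Finally, to upgrade to a complete isometry I would run the same argument at every matrix level. Using $M_n(CB(E,\mathcal{B}(\mathcal{H},\mathcal{K}))) = CB(E, M_n(\mathcal{B}(\mathcal{H},\mathcal{K}))) = CB(E,\mathcal{B}(\mathcal{H}^n,\mathcal{K}^n))$ on one side and the identification of $M_n$ of the dual with CB maps into $M_n$ on the other, the bijection $T \leftrightarrow \sigma_T$ at the level of $\mathcal{H}^n,\mathcal{K}^n$ matches the $n$-th amplification of $\Phi$ exactly, because the column identifications behave well under direct sums: $(\mathcal{H}^n)_c = (\mathcal{H}_c)^n$ etc. The main technical point where one has to be careful is the bookkeeping in the second step: one must make sure that the column/row dualities used to convert $\phi_1$ and $\phi_3$ into the operators $U$ and $V$ really are completely isometric (rather than only isometric), and track the antilinear factor appearing in Remark \ref{DualH} so that $\sigma_T$ as defined matches the pairing produced by Theorem \ref{DualityH} on the nose. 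Everything else is a routine compatibility check with the standard completely isometric identifications of $\mathcal{H}_c$, $\mathcal{K}_c^*$ and their direct sums.
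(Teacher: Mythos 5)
The paper offers no proof of this proposition: it is quoted directly from Effros--Ruan (\cite[Proposition 9.3.3]{Ruan}), so there is no internal argument to compare yours against. Your sketch is a correct rendering of the standard proof, and essentially the one in the cited source: the inequality $\|\sigma_T\|\leq\|T\|_{\text{cb}}$ follows from the easy direction of Theorem \ref{DualityH} applied to the product $\iota_{\mathcal{K}}(k^*)\,T(e)\,\iota_{\mathcal{H}}(h)$ of three completely bounded maps, and the converse from the Christensen--Sinclair factorization together with the completely isometric identifications $CB(\mathcal{H}_c,(H_2)_c)=\mathcal{B}(\mathcal{H},H_2)$ and $CB\bigl((\mathcal{K}_c)^*,((H_1)_c)^*\bigr)=\mathcal{B}(H_1,\mathcal{K})$ (bounded maps between column spaces, respectively between row spaces, are automatically completely bounded with the same norm, so $\phi_1$ and $\phi_3$ really are just Hilbert space operators). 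You correctly identify these identifications as the point requiring care; note that by invoking Theorem \ref{DualityH} directly rather than Remark \ref{DualH} you in fact avoid the antilinearity issue altogether. For the upgrade to a complete isometry, the bookkeeping you defer comes down to $(\mathcal{H}^n)_c\cong C_n\overset{h}{\otimes}\mathcal{H}_c$, the commutation of $R_n$ past the row space $(\mathcal{K}_c)^*$ and of $C_n$ past the column space $\mathcal{H}_c$, and the standard identity $M_n(X^*)=\bigl(R_n\overset{h}{\otimes}X\overset{h}{\otimes}C_n\bigr)^*$; all of these are routine, so the argument goes through as you describe.
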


\subsection{Schatten classes}\label{Schattenclasses}

Let $\mathcal{H}$ and $\mathcal{K}$ be separable Hilbert spaces. For any $1\leq p < +\infty$, let $\mathcal{S}^p(\mathcal{H}, \mathcal{K})$ be the space of compact operators $T \colon \mathcal{H} \rightarrow \mathcal{K}$ such that
$$
\|T\|_p \colon= \text{tr}(|T|^p)^{\frac{1}{p}} < \infty.
$$
$\|\cdot\|_p$ is a norm on $\mathcal{S}^p(\mathcal{H}, \mathcal{K})$ and $(\mathcal{S}^p(\mathcal{H}, \mathcal{K}), \|\cdot\|_p)$ is called the Schatten class of order $p$. When $p=\infty$, the space $\mathcal{S}^{\infty}(\mathcal{H}, \mathcal{K})$ will denote the space of compact operators equipped with the operator norm.\\
Recall that $\left( \mathcal{S}^1(\mathcal{H}, \mathcal{K}) \right)^* = \mathcal{B}(\mathcal{K}, \mathcal{H})$ and that for $1<p\leq +\infty$, $\left( \mathcal{S}^p(\mathcal{H}, \mathcal{K}) \right)^* = \mathcal{S}^{p'}(\mathcal{K}, \mathcal{H})$ where $p'$ is the conjugate exponent of $p$, for the duality pairing
$$
\left\langle S,T \right\rangle  = \text{tr}(ST),
$$
$S \in \mathcal{S}^p(\mathcal{H}, \mathcal{K})$ and $T\in \mathcal{S}^{p'}(\mathcal{K}, \mathcal{H})$.\\
Using the Haagerup tensor product introduced in Subsection $\ref{Haagerup}$, we have, by \cite[Proposition 9.3.4]{Ruan}, a complete isometry
\begin{equation}\label{traceOT}
(\mathcal{H}_c)^* \overset{h}{\otimes} \mathcal{K}_c = \mathcal{S}^1(\mathcal{H}, \mathcal{K}).
\end{equation}
where $\mathcal{S}^1(\mathcal{H}, \mathcal{K})$ is equipped with its operator space structure as the predual of $\mathcal{B}(\mathcal{K}, \mathcal{H})$.

\noindent Similarly, we have a complete isometry
\begin{equation}\label{compactOT}
\mathcal{K}_c \overset{h}{\otimes} (\mathcal{H}_c)^* = \mathcal{S}^{\infty}(\mathcal{H}, \mathcal{K}).
\end{equation}
Finally, if $(\Omega_1, \mu_1)$ and $(\Omega_2, \mu_2)$ are two $\sigma$-finite measure spaces, we will identify $L^2(\Omega_1 \times \Omega_2)$ with the space $\mathcal{S}^2(L^2(\Omega_1), L^2(\Omega_2))$ of Hilbert-Schmidt operators  as follows. If $K\in L^2(\Omega_1 \times \Omega_2)$, the operator
\begin{equation}\label{S2=L2not}
\begin{array}[t]{lccc}
X_K \colon & L^2(\Omega_1) & \longrightarrow & L^2(\Omega_2) \\
& f & \longmapsto & \displaystyle \int_{\Omega_1} K(t,\cdot)f(t) \mathrm{d}\mu_1(t)  \end{array}
\end{equation}
is a Hilbert-Schmidt operator and $\|X_J\|_2=\|J\|_{L^2}$. Moreover, any element of $\mathcal{S}^2(L^2(\Omega_1), L^2(\Omega_2))$ has this form.

\subsection{$L_{\sigma}^p$-spaces and duality}

Let $(\Omega, \mu)$ be a $\sigma$-finite measure space and let 
$F$ be a Banach space. For any $1\leq p \leq +\infty$,
we let $L^p(\Omega;F)$ denote the classical Bochner space of measurable functions
$f \colon\Omega\to F$.\\
Assume that $E$ is a separable Banach space. A function $f \colon \Omega \rightarrow E^*$ is said to be $w^*$-measurable if for all $ e\in E$, the function $t \in \Omega \mapsto \langle \phi(t), e \rangle$ is measurable. We denote by $L^p_{\sigma}(\Omega;E^*)$ the space of all $w^*$-measurable $f \colon \Omega \rightarrow E^*$ such that $\|f(\cdot)\| \in L^p(\Omega)$, after taking quotient by the functions which are equal to $0$ almost  everywhere. Equipped with the norm
$$
\| f\|_p = \| \|f(.)\| \|_{L^p(\Omega)},
$$
$(L^p_{\sigma}(\Omega; E^*), \|.\|_p)$ is a Banach space.\\
Let $1\leq p' \leq +\infty$ be the conjugate exponent of $p$. Then we have an isometric isomorphism
$$
L^p(\Omega;E)^* = L^{p'}_{\sigma}(\Omega; E^*)
$$
through the duality pairing 
\begin{equation*}
\langle f, g \rangle \colon= \int_{\Omega} 
\langle f(t), g(t) 
\rangle \,\text{d}\mu(t)\,.
\end{equation*}
See \cite[Section 4]{CLS} and the references therein for a proof of that result and more informations about $L_{\sigma}^p$-spaces.\\
Note that by \cite[Chapter IV]{Diestel}, the equality $L^p_\sigma(\Omega; E^*)=L^p(\Omega; E^*)$ is equivalent to $E^*$ having the Radon-Nikodym property. It is for instance the case for Hilbert spaces.

The important identification we will need in this paper is the following. For any $f\in L^\infty_\sigma(\Omega; E^*)$, define
\begin{equation}\label{Lpsigmaid}
u_f \colon \psi \in L^1(\Omega) \mapsto \left[ e\in E \mapsto \int_{\Omega} \left\langle f(t),e \right\rangle \psi(t) \,\text{d}t  \right] \in  E^*.
\end{equation}
Then $f \mapsto u_f$ yields an isometric identification (see \cite[Theorem 2.1.6]{DunPet})
\begin{equation}\label{DP}
L^\infty_\sigma(\Omega; E^*) = \mathcal{B}(L^1(\Omega),E^*).
\end{equation}
In particular, for a Hilbert space $\mathcal{H}$ we have the equality
\begin{equation}\label{DPHilbert}
L^{\infty}(\Omega; \mathcal{H}) = \mathcal{B}(L^1(\Omega), \mathcal{H}).
\end{equation}

\section{Multiple operator integrals}\label{MOI}

\subsection{Multiple operator integrals associated with operators}\label{Operators}

Let $\mathcal{H}$ be a separable Hilbert space and let $A$ be a (possibly unbounded) normal operator on $\mathcal{H}$. We denote by $\sigma(A)$ the spectrum of $A$ and 
by $E^A$ its spectral measure. A scalar-valued spectral measure for $A$ is a positive measure $\lambda_A$ on the Borel subsets of $\sigma(A)$ such that $\lambda_A$ and $E^A$ have the same sets of measure zero. Let $e$ be a separating vector of the von Neumann algebra $W^*(A)$ generated by $A$ (see \cite[Corollary 14.6]{Conway}). Then, by \cite[Proposition 15.3]{Conway}, the measure $\lambda_A$ defined by
$$
\lambda_A = \|E^A(.)e\|^2
$$
is a scalar-valued spectral measure for $A$.
We refer to \cite[Section 15]{Conway} and \cite[Section 2.1]{CLS} for more details.\\
For any bounded Borel function $f \colon \sigma(A) \to \mathbb{C}$, we define $f(A) \in \mathcal{B}(\mathcal{H})$ by
$$
f(A):=\int_{\sigma(A)} f(t) \ \text{d}E^A(t),
$$
and this operator only depends on the class of $f$ in $L^{\infty}(\lambda_A)$. According to \cite[Theorem 15.10]{Conway}, we obtain a $w^*$-continuous $*$-representation
$$
f \in L^{\infty}(\lambda_A) \mapsto f(A) \in \mathcal{B}(\mathcal{H}).
$$
Moreover, the space $L^{\infty}(\lambda_A)$ does not depend on the choice of the scalar-valued spectral measure. 

Let $n\in \mathbb{N}, n\geq 1$ and let $E_1, \ldots, E_n, E$ be Banach spaces. We denote by $\mathcal{B}_n(E_1 \times \cdots \times E_n, E)$ the space of $n$-linear continuous mappings from $E_1 \times \cdots \times E_n$ into $E$ equipped with the  norm
$$
\|T\|_{\mathcal{B}_n(E_1 \times \cdots \times E_n, E)} := \sup_{\|e_i\| \leq 1, 1\leq i \leq n} ~ \|T(e_1, \ldots, e_n)\|.
$$
When $E_1  = \cdots = E_n = E$, we will simply write $\mathcal{B}_n(E)$.

Let $n\in\mathbb{N}, n\geq 2$ and let $A_1, A_2, \ldots, A_n$ be normal operators in $\mathcal{H}$ with scalar-valued spectral measures $\lambda_{A_1}, \ldots, \lambda_{A_n}$. We let
\begin{equation*}
\Gamma^{A_1,A_2, \ldots, A_n} \colon L^{\infty}(\lambda_{A_1}) \otimes \cdots \otimes L^{\infty}(\lambda_{A_n}) \rightarrow \mathcal{B}_{n-1}(\mathcal{S}^2(\mathcal{H}))
\end{equation*}
to be the unique linear map such that for any $f_i \in L^{\infty}(\lambda_{A_i}), i=1, \ldots, n$ and for any $X_1, \ldots, X_{n-1} \in \mathcal{S}^2(\mathcal{H})$,
\begin{align*}
\left[\Gamma^{A_1,A_2, \ldots, A_n}(f_1\otimes\cdots\otimes f_n)\right]
& (X_1,\ldots, X_{n-1})\\ \nonumber
& =f_1(A_1)X_1f_2(A_2) \cdots f_{n-1}(A_{n-1})X_{n-1}f_n(A_n).
\end{align*}
We have a natural inclusion $L^{\infty}(\lambda_{A_1}) \otimes \cdots \otimes L^{\infty}(\lambda_{A_n}) \subset L^{\infty}\left(\prod_{i=1}^n
\lambda_{A_i}\right)$ which is $w^*$-dense. The following shows that $\Gamma^{A_1,A_2, \ldots, A_n}$ extends to $L^{\infty}\left(\prod_{i=1}^n
\lambda_{A_i}\right)$. It was proved in \cite[Theorem 4 and Proposition 5]{CLS}.

\begin{thm} $\Gamma^{A_1,A_2, \ldots, A_n}$ extends to a unique $w^*$-continuous isometry still denoted by
$$
\Gamma^{A_1,A_2, \ldots, A_n} \colon L^{\infty}\left(\prod_{i=1}^n
\lambda_{A_i}\right) \longrightarrow
\mathcal{B}_{n-1}(\mathcal{S}^2(\mathcal{H})).
$$
\end{thm}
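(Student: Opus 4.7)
The plan is to realize $\mathcal{B}_{n-1}(\mathcal{S}^2(\mathcal{H}))$ as a dual Banach space and to construct the extension $\Gamma^{A_1,\ldots,A_n}(\phi)$ for $\phi \in L^\infty(\mu)$, where $\mu := \prod_{i=1}^n \lambda_{A_i}$, by prescribing the scalar pairings
\begin{equation*}
\tau_\phi(X_1,\ldots,X_{n-1},Y) := \text{tr}\bigl(Y^{*}\, \Gamma^{A_1,\ldots,A_n}(\phi)(X_1,\ldots,X_{n-1})\bigr)
\end{equation*}
and recovering $\Gamma^{A_1,\ldots,A_n}(\phi)$ via the Hilbert--Schmidt Riesz isomorphism in the last variable. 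Everything reduces to producing, for each tuple $(X_1,\ldots,X_{n-1},Y) \in \mathcal{S}^2(\mathcal{H})^n$, an $L^1(\mu)$-density $g_{X_1,\ldots,X_{n-1},Y}$ that represents $\tau_\phi$ by integration on the algebraic tensor product and that obeys an estimate linear in each $\|X_i\|_2,\|Y\|_2$.

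The starting computation is on rank-one operators. For $X_i = \xi_i \otimes \bar\eta_i$, $1\le i\le n-1$, and $Y = \zeta \otimes \bar\omega$, using $\text{tr}(Y^{*} T) = \langle T\omega,\zeta\rangle$, a direct telescoping of the defining product $f_1(A_1) X_1 f_2(A_2) \cdots X_{n-1} f_n(A_n)$ yields
\begin{equation*}
\tau_{f_1 \otimes \cdots \otimes f_n}(X_1,\ldots,X_{n-1},Y) = \prod_{i=1}^n \int_{\sigma(A_i)} f_i(t_i)\, d\nu_i(t_i),
\end{equation*}
where each $\nu_i = \langle E^{A_i}(\cdot) v_i, w_i\rangle$ is built from consecutive pairs among $\zeta,\omega,\xi_j,\eta_j$. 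Each $\nu_i$ is absolutely continuous with respect to $\lambda_{A_i}$ with density in $L^1(\lambda_{A_i})$ and total variation $\leq \|v_i\|\|w_i\|$, so the right-hand side is integration of $\phi = f_1\otimes\cdots\otimes f_n$ against the product measure $\nu_1 \times \cdots \times \nu_n \ll \mu$, whose total variation is at most $\|X_1\|_2\cdots\|X_{n-1}\|_2\|Y\|_2$. Extending multilinearly in the $X_i$'s and antilinearly in $Y$ and then passing to the $\mathcal{S}^2$-closure of the finite-rank operators produces the desired density $g_{X_1,\ldots,X_{n-1},Y} \in L^1(\mu)$ for arbitrary $X_i, Y \in \mathcal{S}^2(\mathcal{H})$, still with the product bound on its $L^1$-norm.

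For general $\phi \in L^\infty(\mu)$, I set
\begin{equation*}
\tau_\phi(X_1,\ldots,X_{n-1},Y) := \int_{\prod_i \sigma(A_i)} \phi \cdot g_{X_1,\ldots,X_{n-1},Y}\, d\mu.
\end{equation*}
This form is $(n-1)$-linear in the $X_i$'s, antilinear in $Y$, and bounded by $\|\phi\|_\infty\|X_1\|_2\cdots\|X_{n-1}\|_2\|Y\|_2$, so Riesz representation yields $\Gamma^{A_1,\ldots,A_n}(\phi) \in \mathcal{B}_{n-1}(\mathcal{S}^2(\mathcal{H}))$ of norm at most $\|\phi\|_\infty$, agreeing with the original definition on the algebraic tensor product. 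The $w^*$-continuity is then built in: for each fixed tuple, $\phi \mapsto \tau_\phi(X_1,\ldots,X_{n-1},Y)$ is the pairing with a fixed element of $L^1(\mu)$, and since these evaluations separate points and generate the $w^*$-topology on $\mathcal{B}_{n-1}(\mathcal{S}^2(\mathcal{H}))$ viewed (through the Hilbert--Schmidt pairing) as the dual of the projective tensor product of $n$ copies of $\mathcal{S}^2(\mathcal{H})$, the extension is $w^*$-continuous. Uniqueness is then a consequence of the $w^*$-density of $L^\infty(\lambda_{A_1})\otimes\cdots\otimes L^\infty(\lambda_{A_n})$ in $L^\infty(\mu)$.

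The main obstacle is the reverse inequality $\|\Gamma^{A_1,\ldots,A_n}(\phi)\| \ge \|\phi\|_\infty$ needed for the isometry. I would first treat $\phi = \chi_{B_1 \times \cdots \times B_n}$ with each $\lambda_{A_i}(B_i) > 0$: then $\Gamma^{A_1,\ldots,A_n}(\phi)(X_1,\ldots,X_{n-1}) = E^{A_1}(B_1) X_1 E^{A_2}(B_2) \cdots X_{n-1} E^{A_n}(B_n)$, and choosing rank-one $X_i = \xi_i \otimes \bar\eta_i$ with $\xi_i$ a unit vector in $E^{A_i}(B_i)\mathcal{H}$ and $\eta_i$ a unit vector in $E^{A_{i+1}}(B_{i+1})\mathcal{H}$ saturates the estimate. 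The delicate step is extracting from a general $\phi \in L^\infty(\mu)$ with $\|\phi\|_\infty = M$ a "good" rectangle on which $|\phi|$ is close to $M$; this I would handle by approximating $\{|\phi| > M - \epsilon\}$ from inside by finite disjoint unions of rectangles in $L^1(\mu)$, using a measurable unimodular factor to remove phase cancellations, and then passing to the limit via the $w^*$-continuity already established, thereby completing the isometric extension.
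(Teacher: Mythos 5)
The paper does not prove this theorem itself: it is imported from \cite[Theorem 4 and Proposition 5]{CLS}, so there is no in-paper argument to compare against. Your overall strategy (realizing $\mathcal{B}_{n-1}(\mathcal{S}^2(\mathcal{H}))$ as the dual of the $n$-fold projective tensor product of $\mathcal{S}^2(\mathcal{H})$ and representing the pairing by an $L^1(\mu)$-density, $\mu=\prod_i\lambda_{A_i}$) is the right one and is essentially the Birman--Solomyak/Pavlov route of the cited source. There is, however, a genuine gap at the central estimate. You prove $\|g_{X_1,\ldots,X_{n-1},Y}\|_{L^1(\mu)}\le\|X_1\|_2\cdots\|X_{n-1}\|_2\|Y\|_2$ only for rank-one operators, and ``extending multilinearly and passing to the $\mathcal{S}^2$-closure of the finite-rank operators'' does not preserve this bound: expanding each $X_i$ in its Schmidt decomposition and applying the triangle inequality to the resulting sum of densities controls the $L^1$-norm by the \emph{trace} norms $\|X_i\|_1$, not the Hilbert--Schmidt norms, and $\mathcal{S}^2$ is not the closed $\ell^1$-span of norm-one rank-one operators. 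The missing ingredient --- which is the analytic heart of the theorem --- is the grid estimate
$$
\Bigl\| \sum_{k_1,\ldots,k_n} c_{k_1\ldots k_n}\, E^{A_1}(B^1_{k_1})\,X_1\,E^{A_2}(B^2_{k_2})\cdots X_{n-1}\,E^{A_n}(B^n_{k_n}) \Bigr\|_2 \;\le\; \max_{k}|c_{k}|\;\|X_1\|_2\cdots\|X_{n-1}\|_2
$$
for finite measurable partitions $\{B^i_k\}_k$ of $\sigma(A_i)$, proved by iterated Cauchy--Schwarz using the pairwise orthogonality in $\mathcal{S}^2$ of operators cut out by the spectral projections. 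Only with this does the polymeasure $(B_1,\ldots,B_n)\mapsto \text{tr}\bigl(E^{A_1}(B_1)X_1\cdots E^{A_n}(B_n)Y^*\bigr)$ acquire total variation at most $\prod_i\|X_i\|_2\,\|Y\|_2$ and hence an $L^1(\mu)$ density of the right norm.

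Your lower bound also has a gap. The rectangle computation is correct, but the reduction of a general $\phi$ to rectangles via ``a measurable unimodular factor to remove phase cancellations'' does not implement: the densities you are allowed to test against are, through the operators $X_i,Y$, products of functions of \emph{consecutive pairs} of variables, and for $n\ge 3$ a unimodular function of all $n$ variables jointly need not factor this way, even approximately; multiplying $\phi$ itself by such a factor changes the symbol and hence the norm you are estimating. The standard repair needs no phase correction: take increasing finite partitions generating each $\sigma$-algebra (possible by separability), observe that the conditional expectations of $\phi$ onto the resulting rectangle grids converge to $\phi$ almost everywhere by martingale convergence, and hence find a single rectangle $R=B_1\times\cdots\times B_n$ with $\bigl|\mu(R)^{-1}\int_R\phi\,\text{d}\mu\bigr|>\|\phi\|_\infty-\epsilon$; testing with the normalized rank-one operators supported by $R$ then yields $\|\Gamma^{A_1,\ldots,A_n}(\phi)\|\ge\|\phi\|_\infty-\epsilon$ directly.
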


\begin{defn}
For $\phi \in L^{\infty}\left(\prod_{i=1}^n\lambda_{A_i}\right)$, the transformation $\Gamma^{A_1,A_2, \ldots, A_n}(\phi)$ is called a multiple operator integral associated to $A_1, A_2, \ldots, A_n$ and $\phi$.
\end{defn}
\noindent The $w^*$-continuity of $\Gamma^{A_1,A_2, \ldots, A_n}$ means that if a net $(\phi_i)_{i\in I}$ in $L^{\infty}\left(\prod_{i=1}^n
\lambda_{A_i}\right)$ converges to $\phi \in L^{\infty}\left(\prod_{i=1}^n \lambda_{A_i}\right)$ in the $w^*$-topology, then for any $X_1, \ldots, X_{n-1} \in \mathcal{S}^2(\mathcal{H})$, the net
$$
\bigl(\left[\Gamma^{A_1,A_2, \ldots, A_n}(\phi_i)\right](X_1,\ldots, X_{n-1})\bigr)_{i\in I}
$$
converges to $\left[\Gamma^{A_1,A_2, \ldots, A_n}(\phi)\right](X_1,\ldots, X_{n-1})$ weakly in $\mathcal{S}^2(\mathcal{H})$. We refer to \cite[Section 3.1]{CLS} for more details. \\

\subsection{Continuous multilinear Schur multipliers}\label{Functions}

Let $n\in \mathbb{N}$. Let $(\Omega_1, \mu_1), \ldots, (\Omega_n, \mu_n)$ be $\sigma$-finite measure spaces, and let $\phi \in L^{\infty}(\Omega_1 \times\cdots \times \Omega_n)$. 
Let $\Omega = \Omega_2 \times \cdots \times \Omega_{n-1}$. For any $K_i \in L^2(\Omega_i\times \Omega_{i+1})$, $1\leq i \leq n-1$, we let $\Lambda(\phi)(K_1,\ldots, K_{n-1})$ to be the function
$$
(t_1,t_n) \mapsto \int_{\Omega} 
\phi(t_1,\ldots,t_n)K_1(t_1,t_2)\ldots K_{n-1}(t_{n-1},t_n) \,\text{d}\mu_2(t_2) \ldots \text{d}\mu_{n-1}(t_{n-1})
$$
By Cauchy-Schwarz inequality, $\Lambda(\phi)(K_1,\ldots, K_{n-1}) \in L^2(\Omega_1 \times \Omega_n)$ and
\begin{equation}\label{MOIFunctionsCont}
\| \Lambda(\phi)(K_1,\ldots, K_{n-1}) \|_2 \leq \|\phi\|_{\infty}\|K_1\|_2 \ldots \|K_{n_1}\|_2.
\end{equation}
Thus, $\Lambda(\phi)$ defines a bounded $(n-1)$-linear map
$$\Lambda(\phi) \colon L^2(\Omega_1 \times \Omega_2) \times L^2(\Omega_2\times \Omega_3) \times \cdots \times  L^2(\Omega_{n-1} \times \Omega_n) \longrightarrow L^2(\Omega_1 \times \Omega_n),$$
or, equivalently, by \eqref{S2=L2not} and the obvious equality $\mathcal{S}^2(L^2(\Omega_i), L^2(\Omega_j)) = \mathcal{S}^2(L^2(\Omega_j), L^2(\Omega_i)), 1\leq i,j \leq n,$ a bounded $(n-1)$-linear map
$$
\Lambda(\phi) \colon \mathcal{S}^2(L^2(\Omega_2), L^2(\Omega_1)) \times \cdots \times \mathcal{S}^2(L^2(\Omega_n), L^2(\Omega_{n-1})) \rightarrow \mathcal{S}^2(L^2(\Omega_n), L^2(\Omega_1)).
$$
For simplicity, write  $E_i = L^2(\Omega_i), 1\leq i  \leq n$. Then, the map $\Lambda \colon \phi \mapsto \Lambda(\phi)$ is a linear isometry
$$
\Lambda \colon L^{\infty}(\Omega_1 \times\cdots \times \Omega_n) \longrightarrow B_{n-1}(\mathcal{S}^2(E_2, E_1) \times \cdots \times \mathcal{S}^2(E_n, E_{n-1}), \mathcal{S}^2(E_n, E_1)).
$$
This follow e.g. from similar computations as those in the proof of \cite[Proposition 8]{CLS} or from \cite[Theorem 3.1]{JTT}.\\

Let $\mathcal{H}$ be a separable Hilbert space and let $A_1, \ldots, A_n$ be 
normal operators on $\mathcal{H}$. For any $1\leq i \leq n$, let $e_i \in \mathcal{H}$ be such that
$$
\lambda_{A_i}(\cdot)=\|E^{A^i}(\cdot)e_i\|^2.
$$
By \cite[Subsection 4.2]{CLS}, the linear mappings $\rho_i \colon L^2(\sigma(A_i), \lambda_{A_i}) \to \mathcal{H}$ defined for any measurable subset $F\subset \sigma(A_i)$ by
$$
\rho_i(\chi_F) = E^{A_i}(F)e_i
$$
extends uniquely to an isometry $\rho_i \colon L^2(\sigma(A_i), \lambda_{A_i}) \to \mathcal{H}$. Hence, denoting by $\mathcal{H}_i$ the range of $\rho_i$, we get that $\rho_i \colon L^2(\sigma(A_i), \lambda_{A_i}) \equiv \mathcal{H}_i$ is a unitary.

In the next result, we will consider the map $\Lambda$ introduced before and associated with the measure spaces $(\Omega_i,\mu_i)=(\sigma(A_i),\lambda_{A_i})$. We see any operator $T\in \mathcal{S}^2(\mathcal{H}_i, \mathcal{H}_j)$ as an element of $\mathcal{S}^2(\mathcal{H})$ by identifying $T$ with the matrix
$
\begin{pmatrix} T & 0 \\ 0 & 0\end{pmatrix}\ \in \,\mathcal{S}^2\bigl(\mathcal{H}_i\overset{2}\oplus \mathcal{H}_i^\perp,
\mathcal{H}_j\overset{2}\oplus \mathcal{H}_j^\perp\bigr).
$
The following makes the connection between the multiple operator integrals associated with operators and the map $\Lambda$ defined above. In particular, when one restricts the Hilbert space $\mathcal{H}$ to the subspaces $\mathcal{H}_i$, then the associated multiple operator integral coincides with $\Lambda$. It is the analogue of \cite[Proposition 9]{CLS} for $n$ operators. The proof is similar and we leave it to the reader.

\begin{prop}\label{Connection}
Let, for any $1\leq i \leq n-1, K_i \in \mathcal{S}^2(L^2(\lambda_{A_{i+1}}), L^2(\lambda_{A_i}))$ and set
$$
\widetilde{K}_i=\rho_i \circ K_i \circ \rho_{i+1}^{-1} \in \mathcal{S}^2(\mathcal{H}_{i+1}, \mathcal{H}_i).
$$
For any $\phi\in L^\infty(\lambda_{A_1} \times \cdots \times\lambda_{A_n})$, $\Gamma^{A_1, \ldots, A_n}(\phi)(\widetilde{K}_1, \ldots, \widetilde{K}_{n-1})$ belongs to $\mathcal{S}^2(\mathcal{H}_n,\mathcal{H}_1)$ and 
\begin{equation}\label{subspace}
\Lambda(\phi)(K_1, \ldots,K_{n-1}) = \rho_1^{-1} \circ 
\Gamma^{A_1, \ldots, A_n}(\phi)(\widetilde{K}_1, \ldots, \widetilde{K}_{n-1}) \circ \rho_n.
\end{equation}
\end{prop}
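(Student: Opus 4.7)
The plan is to reduce to the case of elementary tensors $\phi = f_1 \otimes \cdots \otimes f_n$ by a $w^*$-continuity argument, and to handle that case by a direct computation based on an intertwining property of the isometries $\rho_i$.

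The key preliminary observation is that each subspace $\mathcal{H}_i$ is invariant under $W^*(A_i)$. Indeed $\mathcal{H}_i = \overline{W^*(A_i)e_i}$ by construction, and starting from the defining formula $\rho_i(\chi_F) = E^{A_i}(F)e_i$ one checks that for every $f \in L^\infty(\lambda_{A_i})$,
\[
f(A_i) \circ \rho_i = \rho_i \circ M_f,
\]
where $M_f$ denotes multiplication by $f$ on $L^2(\lambda_{A_i})$. The identity holds for $f = \chi_F$ by definition, extends to simple functions by linearity, and to $L^\infty(\lambda_{A_i})$ by the $w^*$-continuity of both the Borel functional calculus and $f \mapsto M_f$.

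Now for $\phi = f_1 \otimes \cdots \otimes f_n$, we have
\[
\Gamma^{A_1,\ldots,A_n}(\phi)(\widetilde{K}_1,\ldots,\widetilde{K}_{n-1}) = f_1(A_1)\widetilde{K}_1 f_2(A_2) \widetilde{K}_2 \cdots \widetilde{K}_{n-1} f_n(A_n).
\]
Since each $\widetilde{K}_i = \rho_i \circ K_i \circ \rho_{i+1}^{-1}$ has range in $\mathcal{H}_i$, we can slide the operators $f_i(A_i)$ through the composition using the intertwining relation: $f_n(A_n)\rho_n = \rho_n M_{f_n}$, then $\widetilde{K}_{n-1}\rho_n = \rho_{n-1}K_{n-1}$ (as $\rho_n^{-1}\rho_n = \mathrm{id}$), then $f_{n-1}(A_{n-1})\rho_{n-1} = \rho_{n-1}M_{f_{n-1}}$, and so on, the $\rho_j$'s cancelling in pairs. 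Composing on the left with $\rho_1^{-1}$ and on the right with $\rho_n$, the expression collapses to
\[
M_{f_1} K_1 M_{f_2} K_2 \cdots M_{f_{n-1}} K_{n-1} M_{f_n} \in \mathcal{S}^2(L^2(\lambda_{A_n}), L^2(\lambda_{A_1})),
\]
and a direct computation with integral kernels identifies this with $\Lambda(f_1 \otimes \cdots \otimes f_n)(K_1,\ldots,K_{n-1})$. In particular, $\Gamma^{A_1,\ldots,A_n}(\phi)(\widetilde{K}_1,\ldots,\widetilde{K}_{n-1}) \in \mathcal{S}^2(\mathcal{H}_n,\mathcal{H}_1)$ for such $\phi$, and \eqref{subspace} holds.

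Finally, extend to arbitrary $\phi \in L^\infty(\prod_i \lambda_{A_i})$ by $w^*$-density of the algebraic tensor product $L^\infty(\lambda_{A_1}) \otimes \cdots \otimes L^\infty(\lambda_{A_n})$. For fixed $K_1,\ldots,K_{n-1}$, the map $\phi \mapsto \Gamma^{A_1,\ldots,A_n}(\phi)(\widetilde{K}_1,\ldots,\widetilde{K}_{n-1})$ is $w^*$-to-weak continuous into $\mathcal{S}^2(\mathcal{H})$ by the $w^*$-continuity stated in the theorem of Subsection \ref{Operators}. Likewise, $\phi \mapsto \Lambda(\phi)(K_1,\ldots,K_{n-1})$ is $w^*$-to-weak continuous into $\mathcal{S}^2(L^2(\lambda_{A_n}), L^2(\lambda_{A_1}))$: testing against $L \in L^2(\Omega_1 \times \Omega_n)$, the function $(t_1,\ldots,t_n) \mapsto K_1(t_1,t_2) \cdots K_{n-1}(t_{n-1},t_n) \overline{L(t_1,t_n)}$ belongs to $L^1(\prod_i \lambda_{A_i})$ by iterated Cauchy--Schwarz. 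Since $\mathcal{S}^2(\mathcal{H}_n,\mathcal{H}_1) \subset \mathcal{S}^2(\mathcal{H})$ is weakly closed (cut out by $T = P_1 T P_n$ where $P_i$ is the orthogonal projection onto $\mathcal{H}_i$) and $T \mapsto \rho_1^{-1} T \rho_n$ is bounded hence weakly continuous, passing to the $w^*$-limit preserves both the membership in $\mathcal{S}^2(\mathcal{H}_n,\mathcal{H}_1)$ and the identity \eqref{subspace}. The main obstacle is purely bookkeeping in the elementary-tensor step: one has to observe at each stage of the composition that the vector being acted upon already lies in the relevant invariant subspace $\mathcal{H}_i$, which is what permits the substitution $f_i(A_i) = \rho_i M_{f_i} \rho_i^{-1}$ at that stage.
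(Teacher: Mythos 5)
Your proof is correct and follows exactly the route the paper intends: the paper leaves this proof to the reader as ``similar'' to \cite[Proposition 9]{CLS}, whose method is precisely yours --- verify the identity on elementary tensors $f_1\otimes\cdots\otimes f_n$ via the intertwining relation $f(A_i)\circ\rho_i=\rho_i\circ M_f$, then extend by $w^*$-density and the $w^*$-continuity of $\Gamma^{A_1,\ldots,A_n}$ and of $\phi\mapsto\Lambda(\phi)(K_1,\ldots,K_{n-1})$. The only point worth making explicit is that $\mathcal{H}_n^\perp$ is also invariant under $f_n(A_n)$ (since $W^*(A_n)$ is selfadjoint, $\mathcal{H}_n$ is reducing), which is what forces the composition to vanish on $\mathcal{H}_n^\perp$ and hence to lie in $\mathcal{S}^2(\mathcal{H}_n,\mathcal{H}_1)$.
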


\section{Characterization of the complete boundedness of multiple operator integrals}\label{main}

Let $A_1, \ldots, A_n$ be $n$ normal operators on a separable Hilbert space $\mathcal{H}$ associated to scalar-valued spectral measures $\lambda_{A_1}, \ldots, \lambda_{A_n}$. For $\phi \in L^{\infty}(\lambda_{A_1} \times \cdots \times \lambda_{A_n})$, $\Gamma^{A_1, \ldots, A_n}(\phi)$ belongs to $\mathcal{B}_{n-1}(\mathcal{S}^2(\mathcal{H}))$, which is equivalent, by \cite[Section 3.1]{CLS}, to having a continuous mapping defined on the projective tensor product of $n-1$ copies $\mathcal{S}^2(\mathcal{H})$ and still denoted by
$$\Gamma^{A_1, \ldots, A_n}(\phi) \colon \mathcal{S}^2(\mathcal{H}) \overset{\wedge}{\otimes}\cdots \overset{\wedge}{\otimes} \mathcal{S}^2(\mathcal{H}) \rightarrow \mathcal{S}^2(\mathcal{H}).$$
We will make this identification for the rest of the paper.

The purpose of this section is to characterize the functions $\phi \in L^{\infty}(\lambda_{A_1} \times \cdots \times \lambda_{A_n})$ such that $\Gamma^{A_1, \ldots, A_n}(\phi)$ extends to a (completely) bounded map
$$\Gamma^{A_1, \ldots, A_n}(\phi) \colon \underbrace{\mathcal{S}^{\infty}(\mathcal{H}) \overset{h}{\otimes} \cdots \overset{h}{\otimes} \mathcal{S}^{\infty}(\mathcal{H})}_{n-1 \ \text{times}} \longrightarrow \mathcal{S}^{\infty}(\mathcal{H}).$$

We will also consider the continuous multilinear Schur multipliers $\Lambda(\phi)$. In \cite{JTT}, the authors studied and characterized the boundedness of continuous multilinear Schur multipliers
$$\mathcal{S}^{\infty}(L^2(\lambda_{A_{n-1}}), L^2(\lambda_{A_n})) \overset{h}{\otimes} \cdots  \overset{h}{\otimes} \mathcal{S}^{\infty}(L^2(\lambda_{A_1}),L^2(\lambda_{A_2})) \rightarrow \mathcal{S}^{\infty}(L^2(\lambda_{A_1}), L^2(\lambda_{A_n})).$$

They proved that we have such an extension if and only if $\phi$ has a certain factorization that will be given in the theorem below. They also proved that the boundedness for the Haagerup norm in this setting implies the complete boundedness.

The proof of Theorem $\ref{CBMOI}$ below includes another proof of \cite[Theorem 3.4]{JTT}. We show that for multiple operator integrals, boundedness and complete boundedness are also equivalent and that the same characterization holds.

\begin{thm}\label{CBMOI}
Let $n\in \mathbb{N}, n\geq 2$, let $A_1, \ldots, A_n$ be normal operators on a separable Hilbert space $\mathcal{H}$ and let $\phi \in L^{\infty}(\lambda_{A_1} \times \cdots \times \lambda_{A_n})$. For any $1\leq i \leq n$, let $E_i = L^2(\lambda_{A_i})$. The following are equivalent:
\begin{enumerate}
\item[(i)] $\Gamma^{A_1, \ldots, A_n}(\phi)$ extends to a bounded mapping
$$\Gamma^{A_1, \ldots, A_n}(\phi) \colon \mathcal{S}^{\infty}(\mathcal{H}) \overset{h}{\otimes} \cdots \overset{h}{\otimes} \mathcal{S}^{\infty}(\mathcal{H}) \rightarrow \mathcal{S}^{\infty}(\mathcal{H}).$$
\item[(ii)] $\Gamma^{A_1, \ldots, A_n}(\phi)$ extends to a completely bounded mapping
$$\Gamma^{A_1, \ldots, A_n}(\phi) \colon \mathcal{S}^{\infty}(\mathcal{H}) \overset{h}{\otimes} \cdots \overset{h}{\otimes} \mathcal{S}^{\infty}(\mathcal{H}) \rightarrow \mathcal{S}^{\infty}(\mathcal{H}).$$
\item[(iii)] $\Lambda(\phi)$ extends to a completely bounded mapping
$$\Lambda(\phi) \colon \mathcal{S}^{\infty}(E_2, E_1) \overset{h}{\otimes} \cdots  \overset{h}{\otimes} \mathcal{S}^{\infty}(E_n, E_{n-1}) \rightarrow \mathcal{S}^{\infty}(E_n, E_1).$$
\item[(iv)] There exist separable Hilbert spaces $H_1,  \ldots, H_{n-1}$, $a_1\in L^{\infty}(\lambda_{A_1} ; H_1), a_n \in L^{\infty}(\lambda_{A_n} ; H_{n-1})$
and
$a_i\in L_{\sigma}^{\infty}(\lambda_{A_i} ; \mathcal{B}(H_i, H_{i-1})), 2\leq i \leq n-1,$
such that
\begin{equation}\label{phifacto}
\phi(t_1,\ldots,t_n)= \left\langle a_1(t_1), [a_2(t_2)\ldots a_{n-1}(t_{n-1})](a_n(t_n)) \right\rangle
\end{equation}
for a.-e. $(t_1,\ldots,t_n) \in \sigma(A_1) \times \cdots \times \sigma(A_n).$
\end{enumerate}
In this case, 
\begin{equation*}
\left\|\Gamma^{A_1, \ldots, A_n}(\phi) \right\| = \left\|\Gamma^{A_1, \ldots, A_n}(\phi) \right\|_{\text{cb}} = \left\| \Lambda(\phi) \right\|_{\text{cb}}  = \inf \left\lbrace \|a_1\|_{\infty} \cdots \|a_n\|_{\infty} \ | \ \phi \ \text{as in} \ \eqref{phifacto} \right\rbrace.
\end{equation*}
\end{thm}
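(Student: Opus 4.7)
The plan is to establish the cycle $(\text{iv}) \Rightarrow (\text{ii}) \Rightarrow (\text{i}) \Rightarrow (\text{iii}) \Rightarrow (\text{iv})$, with $(\text{ii}) \Rightarrow (\text{i})$ trivial. The main novelty is $(\text{iii}) \Rightarrow (\text{iv})$, from which the automatic upgrade of boundedness to complete boundedness for $\Gamma^{A_1,\ldots,A_n}(\phi)$ drops out; the norm identities will be read off from the quantitative bounds gathered at each step.

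For $(\text{iv}) \Rightarrow (\text{ii})$, I would lift the scalar factorization of $\phi$ to an operator-level one. Combining the $w^*$-continuous functional calculus $f \mapsto f(A_i)$ with the identifications (\ref{DP})--(\ref{DPHilbert}), each $a_i$ produces a bounded operator $\widehat{a_i}(A_i)$ of norm at most $\|a_i\|_\infty$: a ``row'' $\widehat{a_1}(A_1) \in \mathcal{B}(\mathcal{H} \otimes H_1, \mathcal{H})$, a ``column'' $\widehat{a_n}(A_n) \in \mathcal{B}(\mathcal{H}, \mathcal{H} \otimes H_{n-1})$ and ``matrix'' operators $\widehat{a_i}(A_i) \in \mathcal{B}(\mathcal{H} \otimes H_i, \mathcal{H} \otimes H_{i-1})$ for $2 \leq i \leq n-1$. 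Checking on elementary tensors $f_1 \otimes \cdots \otimes f_n$ via the defining formula $\Gamma^{A_1,\ldots,A_n}(f_1 \otimes \cdots \otimes f_n)(X_1, \ldots, X_{n-1}) = f_1(A_1) X_1 \cdots X_{n-1} f_n(A_n)$ and then extending by $w^*$-continuity of $\Gamma^{A_1,\ldots,A_n}$, one obtains
$$\Gamma^{A_1,\ldots,A_n}(\phi)(X_1, \ldots, X_{n-1}) = \widehat{a_1}(A_1) (X_1 \otimes \mathrm{id}_{H_1}) \widehat{a_2}(A_2) \cdots (X_{n-1} \otimes \mathrm{id}_{H_{n-1}}) \widehat{a_n}(A_n),$$
and Theorem \ref{DualityH} identifies the right-hand side as a completely bounded multilinear map of cb norm at most $\|a_1\|_\infty \cdots \|a_n\|_\infty$. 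For $(\text{i}) \Rightarrow (\text{iii})$ I would use Proposition \ref{Connection}: the unitaries $\rho_i$ identify each $\mathcal{S}^\infty(E_{i+1}, E_i)$ completely isometrically with $\mathcal{S}^\infty(\mathcal{H}_{i+1}, \mathcal{H}_i) \subset \mathcal{S}^\infty(\mathcal{H})$, and injectivity of the Haagerup tensor product (Proposition \ref{Injectivity}(ii)) makes the corresponding embedding of Haagerup tensor products completely isometric as well; conjugating by $\rho_1^{-1}$ and $\rho_n$ then realises $\Lambda(\phi)$ as the restriction of $\Gamma^{A_1,\ldots,A_n}(\phi)$ to the embedded subspace, so its (complete) boundedness transfers.

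The heart of the argument is $(\text{iii}) \Rightarrow (\text{iv})$. Complete boundedness of $\Lambda(\phi)$ into $\mathcal{B}(E_n, E_1)$ together with Proposition \ref{SimplOP} (for $\mathcal{K} = E_1$, $\mathcal{H} = E_n$) gives a bounded functional
$$\sigma \in \bigl((E_{1,c})^* \otimes_h \mathcal{S}^\infty(E_2, E_1) \otimes_h \cdots \otimes_h \mathcal{S}^\infty(E_n, E_{n-1}) \otimes_h E_{n,c}\bigr)^*.$$
Unfolding $\mathcal{S}^\infty(E_{i+1}, E_i) = E_{i,c} \otimes_h (E_{i+1,c})^*$ via (\ref{compactOT}) and regrouping consecutive pairs $(E_{i,c})^* \otimes_h E_{i,c} = \mathcal{S}^1(E_i)$ via (\ref{traceOT}) and associativity of the Haagerup tensor product turns $\sigma$ into a bounded functional on $\mathcal{S}^1(E_1) \otimes_h \cdots \otimes_h \mathcal{S}^1(E_n)$. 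A direct rank-one computation identifies it as
$$\sigma(Y_1, \ldots, Y_n) = \int \phi(t_1, \ldots, t_n) \, \Delta_1(Y_1)(t_1) \cdots \Delta_n(Y_n)(t_n) \, \mathrm{d}\lambda_{A_1}(t_1) \cdots \mathrm{d}\lambda_{A_n}(t_n),$$
where $\Delta_i : \mathcal{S}^1(E_i) \to L^1(\lambda_{A_i})$ is the ``kernel-diagonal'' complete metric surjection. Projectivity of the Haagerup tensor product (Proposition \ref{Injectivity}(iii)--(iv)) then lets $\sigma$ descend to a bounded functional $\overline{\sigma}$ on $L^1(\lambda_{A_1}) \otimes_h \cdots \otimes_h L^1(\lambda_{A_n})$, and Remark \ref{DualH} factorises $\overline{\sigma}$ as $\overline{\sigma}(g_1, \ldots, g_n) = \langle \alpha_1(g_1), [\alpha_2(g_2) \cdots \alpha_{n-1}(g_{n-1})] \alpha_n(g_n) \rangle$ through cb maps $\alpha_i$; converting each $\alpha_i$ via (\ref{DP}) or (\ref{DPHilbert}) produces the $a_i$'s required by (\ref{phifacto}). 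The norm identities follow from the optimal bounds in Theorem \ref{DualityH}, Remark \ref{DualH} and the complete isometry in Proposition \ref{SimplOP}.

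I expect the main obstacle to be the diagonal-descent step: verifying that $\sigma$ vanishes on the closed sum of off-diagonal elementary tensors in $\mathcal{S}^1(E_1) \otimes_h \cdots \otimes_h \mathcal{S}^1(E_n)$, which is exactly what makes the passage to the $L^1$-Haagerup tensor product legitimate. This hinges on the precise rank-one formula for $\sigma$ and on identifying $\Delta_i$ as the appropriate complete metric surjection so that Proposition \ref{Injectivity}(iii)--(iv) can be applied. Once this descent is in place, the whole cycle closes and simultaneously yields the four equivalences and the quantitative identities claimed in the theorem.
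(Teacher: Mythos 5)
Your steps (iv) $\Rightarrow$ (ii) and (iii) $\Rightarrow$ (iv) follow essentially the paper's route (the paper implements (iv) $\Rightarrow$ (ii) more carefully, truncating the $a_i$ with finite-rank projections to get uniformly completely bounded approximants $\Gamma^{A_1,\ldots,A_n}(\phi_N)$ and passing to the limit by $w^*$-continuity, which is the rigorous version of your direct operator-level factorization), and your identification of the diagonal-descent step as the delicate point of (iii) $\Rightarrow$ (iv) is exactly right. However, the cycle does not close: the step (i) $\Rightarrow$ (iii) has a genuine gap. Restricting a map that is merely \emph{bounded} on $\mathcal{S}^{\infty}(\mathcal{H}) \overset{h}{\otimes} \cdots \overset{h}{\otimes} \mathcal{S}^{\infty}(\mathcal{H})$ to the completely isometric subspace $\mathcal{S}^{\infty}(\mathcal{H}_2,\mathcal{H}_1) \overset{h}{\otimes} \cdots \overset{h}{\otimes} \mathcal{S}^{\infty}(\mathcal{H}_n,\mathcal{H}_{n-1})$ only yields a \emph{bounded} $\Lambda(\phi)$, not a completely bounded one; and complete boundedness is precisely what you need next, since Proposition \ref{SimplOP} identifies $CB(E,\mathcal{B}(E_n,E_1))$ --- not $\mathcal{B}(E,\mathcal{B}(E_n,E_1))$ --- with the dual of $((E_1)_c)^* \overset{h}{\otimes} E \overset{h}{\otimes} (E_n)_c$. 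A merely bounded map into $\mathcal{B}(E_n,E_1)$ gives no control on the associated functional $\sigma$, so the (iii) $\Rightarrow$ (iv) machinery cannot start. You assert that the bounded-to-cb upgrade ``drops out'' of the cycle, but no step of your cycle actually crosses from bounded to completely bounded.

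The paper closes this gap at the outset, in (i) $\Rightarrow$ (ii): it shows that $\Gamma^{A_1,\ldots,A_n}(\phi)$ is a multilinear $(\mathcal{D},\mathcal{C})$-module map for $\mathcal{D}=W^*(A_1)'$ and $\mathcal{C}=W^*(A_n)'$ (checked on elementary tensors $f_1\otimes\cdots\otimes f_n$ and extended by $w^*$-continuity), observes that $\mathcal{D}$ and $\mathcal{C}$ have cyclic vectors because $W^*(A_1)$ and $W^*(A_n)$ have separating vectors, and then invokes the automatic complete boundedness of such module maps from \cite[Lemma 3.3]{JTT}. Some ingredient of this kind --- an automatic-cb theorem for bounded multimodule maps over algebras with cyclic vectors --- is indispensable; without it your argument establishes the equivalence of (ii), (iii) and (iv) together with the corresponding norm equalities, but not that (i) implies any of them.
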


\smallskip

\begin{proof}

\noindent \underline{Proof of (i) $\Leftrightarrow$ (ii)}
\smallskip

Clearly (ii) $\Rightarrow$ (i) so we only prove (i) $\Rightarrow$ (ii). We keep the notation $\Gamma^{A_1, \ldots, A_n}(\phi)$ for the associated multilinear map defined on $\mathcal{S}^{\infty}(\mathcal{H}) \times \cdots \times \mathcal{S}^{\infty}(\mathcal{H})$.
Let $\mathcal{D} = W^*(A_1)'$ and $\mathcal{C} = W^*(A_n)'$ be the commutant of $W^*(A_1)$ and $W^*(A_n)$, respectively, where the von Neumann algebra $W^*(A)$ was defined in Section $\ref{Operators}$. Then $\Gamma^{A_1, \ldots, A_n}(\phi)$ is a multilinear $(\mathcal{D}, \mathcal{C})$-module map, that is, for any $d\in \mathcal{D}, c\in \mathcal{C}$, and any $X_1, \ldots, X_{n-1} \in \mathcal{S}^{\infty}(\mathcal{H})$,
\begin{equation}\label{bimod}
\left[\Gamma^{A_1, \ldots, A_n}(\phi)\right](dX_1, \ldots, X_{n-1}c) = d\left[\Gamma^{A_1, \ldots, A_n}(\phi)\right](X_1, \ldots, X_{n-1})c.
\end{equation}
By density, it is sufficient to check this equality when $X_i \in \mathcal{S}^2(\mathcal{H})$. But in this case, by linearity and $w^*$-continuity of $\Gamma^{A_1, \ldots, A_n}$, we can further assume that $\phi$ is an elementary tensor $\phi = f_1 \otimes \cdots \otimes f_n$, where $f_i \in L^{\infty}(\lambda_{A_i})$. Then, since $f_1(A_1) \in W^*(A_1)$ and $f_n(A_n) \in W^*(A_n)$ we have
\begin{align*}
& \left[\Gamma^{A_1, \ldots, A_n}(\phi)\right](dX_1, \ldots, X_{n-1}c) \\
& \ \ \ \ \ = f_1(A_1) d X_1 f_2(A_2) \ldots f_{n-1}(A_{n-1})X_{n-1}c f_n(A_n)\\
& \ \ \ \ \ = d f_1(A_1) X_1 f_2(A_2) \ldots f_{n-1}(A_{n-1})X_{n-1} f_n(A_n) c\\
& \ \ \ \ \ = d\left[\Gamma^{A_1, \ldots, A_n}(\phi)\right](X_1, \ldots, X_{n-1})c.
\end{align*}
Note that $W^*(A_1)$ has a separating vector and hence, by \cite[Proposition  14.3]{Conway}, this vector is cyclic for $\mathcal{D}$. Similarly, $\mathcal{C}$ has a cyclic vector. It remains to apply \cite[Lemma 3.3]{JTT} to obtain the complete boundedness of $\Gamma^{A_1, \ldots, A_n}(\phi)$ and the equality of the norms.

\smallskip
\noindent \underline{Proof of (ii) $\Rightarrow$ (iii)}
\smallskip

We use the same notations as in Subsection $\ref{Functions}$ where we introduced the subspaces $\mathcal{H}_i$ of $\mathcal{H}, 1\leq i \leq n$, with $\mathcal{H}_i \equiv L^2(\sigma(A_i), \lambda_{A_i})$. For any $1\leq i \leq n-1$,  $\mathcal{S}^{\infty}(\mathcal{H}_{i+1}, \mathcal{H}_i)$ is a closed subspace of $\mathcal{S}^{\infty}(\mathcal{H})$ and by injectivity of the Haagerup tensor product (see Proposition $\ref{Injectivity}$), we have a closed subspace
$$
\mathcal{S}^{\infty}(\mathcal{H}_2, \mathcal{H}_1) \overset{h}{\otimes} \cdots  \overset{h}{\otimes} \mathcal{S}^{\infty}(\mathcal{H}_n, \mathcal{H}_{n-1}) \subset \mathcal{S}^{\infty}(\mathcal{H}) \overset{h}{\otimes} \cdots  \overset{h}{\otimes} \mathcal{S}^{\infty}(\mathcal{H}).
$$
By Proposition $\ref{Connection}$, the restriction of $\Gamma^{A_1,\ldots,A_n}(\phi)$ to $\mathcal{S}^{\infty}(\mathcal{H}_2, \mathcal{H}_1) \overset{h}{\otimes} \cdots  \overset{h}{\otimes} \mathcal{S}^{\infty}(\mathcal{H}_n, \mathcal{H}_{n-1})$ is valued in $\mathcal{S}^{\infty}(\mathcal{H}_n, \mathcal{H}_1)$. Moreover, this restriction is completely bounded and by the same proposition, we obtain the inequality
$$\left\| \Lambda(\phi) \right\|_{\text{cb}} \leq \left\| \Gamma^{A_1,\ldots,A_n}(\phi) \right\|_{\text{cb}}.$$

\smallskip
\noindent \underline{Proof of (iii) $\Rightarrow$ (iv)}
\smallskip

In this part, the $L^1-$spaces will be equipped with their maximal operator space structure (Max) for which we refer to \cite[Chapter 3]{PisierCB}. If $(\Omega, \mu)$ is a measure space, the mapping
$(f,g) \in L^2(\Omega)^2 \mapsto fg \in L^1(\Omega)$
induces a quotient map
$$ f\otimes g \in L^2(\Omega) \overset{\wedge}{\otimes} L^2(\Omega) \mapsto fg \in L^1(\Omega).$$
We can identify $L^2(\Omega)$ with its conjugate space so that by $\eqref{S1}$ we get a quotient map
$$q \colon \mathcal{S}^1(L^2(\Omega)) \rightarrow L^1(\Omega)$$
which turns out to be a complete metric surjection.

Let $q_i \colon \mathcal{S}^1(L^2(\lambda_{A_i})) \rightarrow L^1(\lambda_{A_i}), i=1, \ldots, n$ be defined as above. Recall the notation $E_i = L^2(\lambda_{A_i})$. Using Proposition $\ref{Injectivity}$ together with the associativity of the Haagerup tensor product, we get a complete metric surjection
$$
Q = q_1 \otimes \cdots \otimes q_n \colon \mathcal{S}^1(E_1) \overset{h}{\otimes} \cdots \overset{h}{\otimes} \mathcal{S}^1(E_n) \rightarrow L^1(\lambda_{A_1}) \overset{h}{\otimes} \cdots \overset{h}{\otimes} L^1(\lambda_{A_n}).
$$
Let $N = \ker Q$ and let, for $1\leq i \leq n, N_i = \ker q_i$. For any $1\leq j \leq n$, let
$$
F_j = \mathcal{S}^1(E_1) \otimes \cdots \otimes \mathcal{S}^1(E_{j-1}) \otimes N_j  \otimes \mathcal{S}^1(E_j) \otimes \cdots \otimes \mathcal{S}^1(E_n).
$$
By Proposition $\ref{Injectivity} \ (iv)$, we obtain that
$$N = \overline{F_1  + F_2 + \cdots + F_n}.$$
Assume that $\Lambda(\phi)$ extends to a completely bounded mapping
$$\Lambda(\phi) \colon \mathcal{S}^{\infty}(E_2, E_1) \overset{h}{\otimes} \cdots  \overset{h}{\otimes} \mathcal{S}^{\infty}(E_n, E_{n-1}) \rightarrow \mathcal{S}^{\infty}(E_n, E_1).$$
Let $E = \mathcal{S}^{\infty}(E_2, E_1) \overset{h}{\otimes} \cdots  \overset{h}{\otimes} \mathcal{S}^{\infty}(E_n, E_{n-1})$. By Proposition $\ref{SimplOP}$, we have a complete isometry
$$CB(E, \mathcal{B}(E_n, E_1)) = \left( ((E_1)_c)^* \overset{h}{\otimes} E \overset{h}{\otimes} (E_n)_c \right)^*.$$
By $\eqref{compactOT}$ we have
$$E = (E_1)_c \overset{h}{\otimes} ((E_2)_c)^* \overset{h}{\otimes} (E_2)_c \overset{h}{\otimes} ((E_3)_c)^* \overset{h}{\otimes} \cdots \overset{h}{\otimes} (E_{n-1})_c \overset{h}{\otimes} ((E_n)_c)^*.$$
Thus, using $\eqref{traceOT}$ and the associativity of the Haagerup tensor product, we get that
$$CB(E, \mathcal{B}(E_n,  E_1)) = \left(\mathcal{S}^1(E_1) \overset{h}{\otimes} \cdots \overset{h}{\otimes} \mathcal{S}^1(E_n)\right)^*.$$
Let $u \colon \mathcal{S}^1(E_1) \overset{h}{\otimes} \cdots \overset{h}{\otimes} \mathcal{S}^1(E_n) \rightarrow \mathbb{C}$ induced by $\Lambda(\phi)$. For any $x_i \in \mathcal{S}^1(H_i), 1\leq i \leq n$, we have
$$u(x_1 \otimes \cdots \otimes x_n) = \int_{\Omega_1 \times \cdots \times \Omega_n} \phi(t_1,\ldots,t_n) [q_1(x_1)](t_1) \ldots [q_n(x_n)](t_n) ~ \text{d}\mu_1(t_1) \ldots \text{d}\mu_n(t_n).$$
To see this, it is enough to check it when the $x_i$ are rank one operators and in that case, one can use the identifications above. In particular, the latter implies that $u$ vanishes on $N = \ker Q$. Since $Q$ is a complete metric surjection, we get a mapping
$$v \colon L^1(\lambda_{A_1}) \overset{h}{\otimes} \cdots \overset{h}{\otimes} L^1(\lambda_{A_n}) \rightarrow \mathbb{C}$$
such that $u = v \circ Q$. An application of Theorem $\ref{DualityH}$ with suitable restrictions using the separability of the spaces $L^1(\lambda_{A_i})$ gives the existence of separable Hilbert spaces $H_1, \ldots, H_{n-1}$ and completely bounded maps
$$
\alpha_1 \colon L^1(\lambda_{A_1}) \rightarrow \mathcal{B}(H_1,\mathbb{C}) = (H_1)_c^*,
$$
$$
 \alpha_i \colon L^1(\lambda_{A_i}) \rightarrow \mathcal{B}(H_i, H_{i-1}), 2\leq i\leq n-1,
$$
$$
\alpha_n \colon L^1(\lambda_{A_n}) \rightarrow \mathcal{B}(\mathbb{C}, H_{n-1})= (H_{n-1})_c
$$
such that for any $f_j \in L^1(\lambda_{A_j}), 1\leq j \leq n$,
$$v(f_1 \otimes \cdots \otimes f_n) = \left\langle \alpha_1(f_1), [\alpha_2(f_2) \ldots \alpha_{n-1}(f_{n-1})](\alpha_n(f_n)) \right\rangle.$$
Since $L^1(\Omega_2)$ is equipped with the Max operator space structure, we have
$$CB(L^1(\lambda_{A_i}), \mathcal{B}(H_i, H_{i-1})) = \mathcal{B}(L^1(\lambda_{A_i}), \mathcal{B}(H_i, H_{i-1})).$$
Moreover, by $\eqref{DP}$, we have
$$ \mathcal{B}(L^1(\lambda_{A_i}), \mathcal{B}(H_i, H_{i-1})) = L^{\infty}_{\sigma}(\lambda_{A_i}; \mathcal{B}(H_i, H_{i-1})).$$
Thus, for any $2\leq i \leq n-1$, we associate to $\alpha_i$ an element $a_i \in L^{\infty}_{\sigma}(\lambda_{A_i}; \mathcal{B}(H_i, H_{i-1}))$. Similarly, we associate to $\alpha_1$ an element $a_1 \in L^{\infty}(\lambda_{A_1}; H_1)$ and to $\alpha_n$ an element  $a_n \in L^{\infty}(\lambda_{A_n}; H_{n-1})$. Using the identification $\eqref{Lpsigmaid}$, we obtain that
$$
\phi(t_1,\ldots,t_n)= \left\langle a_1(t_1), [a_2(t_2)\ldots a_{n-1}(t_{n-1})](a_n(t_n)) \right\rangle
$$
for a.-e. $(t_1,\ldots,t_n) \in \sigma(A_1) \times \cdots \times \sigma(A_n)$, and one can choose $a_1, \ldots, a_n$ such that we have the equality
$$
\left\| \Lambda(\phi) \right\|_{\text{cb}}  = \|a_1\|_{\infty} \cdots \|a_n\|_{\infty}.
$$

\smallskip
\noindent \underline{Proof of (iv) $\Rightarrow$ (ii)}
\smallskip

Assume that there exist separable Hilbert space $H_1 , \ldots, H_{n-1}$, $a_1\in L^{\infty}(\lambda_{A_1} ; H_1), a_i \in L_{\sigma}^{\infty}(\lambda_{A_i} ; \mathcal{B}(H_i, H_{i-1})), 2\leq i\leq n-1$ and $a_n \in L^{\infty}(\lambda_{A_n} ; H_{n-1})$ such that
$$
\phi(t_1,\ldots,t_n)= \left\langle a_1(t_1), [a_2(t_2)\ldots a_{n-1}(t_{n-1})](a_n(t_n)) \right\rangle
$$
for a.-e. $(t_1,\ldots,t_n) \in \sigma(A_1) \times \cdots \times \sigma(A_n)$. Let, for any $1\leq i\leq n-1$, $(\epsilon^i_n)_{n\geq 1}$ be a Hilbertian basis of $H_i$. Define, for $k,l \geq 1$,
$$a^1_k = \left\langle a_1, \epsilon^1_k \right\rangle, a^i_{kl} = \left\langle \epsilon^{i-1}_k, a_i \epsilon^i_l \right\rangle \ \ \text{and} \ \ a^n_l = \left\langle \epsilon^{n-1}_l, a_n \right\rangle.$$
Then $a^1_k \in L^{\infty}(\lambda_{A_1}), a^i_{kl} \in L^{\infty}(\lambda_{A_i}), 2\leq i \leq n-1$, and $a^n_l \in L^{\infty}(\lambda_{A_n})$. To see this, simply note that for $2\leq i \leq n-1$,
$$a^i_{kl} = {\rm tr} (a_i(\cdot) \circ (\epsilon^{i-1}_k \otimes \epsilon^i_l)).$$
For $N \geq 1$ and $1\leq i \leq n-1$, let $P^i_N$ be the orthogonal projection onto $\text{Span}(\epsilon^i_1, \ldots, \epsilon^i_N)$. Then, define
$$\phi_N = \left\langle P^1_N(a(t_1))), [a_2(t_2)P^2_N a_3(t_3)P^3_N \ldots a_{n-1}(t_{n-1})P^{n-1}_N](a_n(t_n)) \right\rangle.$$
It is clear that $(\phi_N)_{N\geq 1}$ is bounded in $L^{\infty}(\lambda_{A_1} \times \cdots \times \lambda_{A_n})$ and that $\phi_N \to \phi$ pointwise when $N \to \infty$. Therefore, by Dominated convergence theorem, we have that $\phi_N \to \phi$ for the $w^*-$topology. This implies, by $w^*-$ continuity of $\Gamma^{A_1,\ldots, A_n}$, that for any $X_j$ in $\mathcal{S}^2(\mathcal{H}), 1\leq j \leq n-1$,
$$\left[\Gamma^{A_1,  \ldots, A_n}(\phi_N)\right](X_1 \otimes \cdots \otimes X_{n-1}) \to \left[\Gamma^{A_1, \ldots, A_n}(\phi)\right](X_1 \otimes \cdots \otimes X_{n-1})$$
weakly in $\mathcal{S}^2(\mathcal{H})$.\\
Assume that $(\Gamma^{A_1, \ldots, A_n}(\phi_N))_N$ is uniformly bounded in $CB(\mathcal{S}^{\infty}(\mathcal{H}) \overset{h}{\otimes} \cdots \overset{h}{\otimes} \mathcal{S}^{\infty}(\mathcal{H}), \mathcal{S}^{\infty}(\mathcal{H})).$ Then, the above approximation property together with the density of $\mathcal{S}^2$ into $\mathcal{S}^{\infty}$ imply that $\Gamma^{A_1, \ldots, A_n}(\phi)$ is completely bounded as well with $\| \Gamma^{A_1, \ldots, A_n}(\phi) \|_{\text{cb}} \leq \sup_N \| \Gamma^{A_1, \ldots, A_n}(\phi_N) \|_{\text{cb}}$.

We will show now that for any $N\geq 1$, $\| \Gamma^{A_1, \ldots, A_n}(\phi_N) \|_{\text{cb}} \leq \|a_1\|_{\infty} \ldots \|a_n\|_{\infty}.$ For any $N\geq 1$ and a.-e. $(t_1,\ldots,t_n) \in \sigma(A_1) \times \cdots \times \sigma(A_n)$, we have
$$\phi_N(t_1, \ldots, t_n) = \sum_{k_1, \ldots, k_{n-1} = 1}^N a^1_{k_1}(t_1) a^2_{k_1 k_2}(t_2) \ldots a^{n-1}_{k_{n-2} k_{n-1}}(t_{n-1}) a^n_{k_n}(t_n),$$
so that for any $X_1, \ldots,  X_{n-1} \in \mathcal{S}^2(\mathcal{H})$,
\begin{align*}
& \left[\Gamma^{A_1, \ldots, A_n}(\phi_N)\right](X_1 \otimes \cdots \otimes X_{n-1}) \\
& = \sum_{k_1, \ldots, k_{n-1} = 1}^N a^1_{k_1}(A_1) X_1 a^2_{k_1 k_2}(A_2) X_2 \ldots X_{n-2} a^{n-1}_{k_{n-2} k_{n-1}}(A_{n-1}) X_{n-1} a^n_{k_n}(A_n).
\end{align*}
Note that the latter can be written as
$$\left[\Gamma^{A_1, \ldots, A_n}(\phi_N)\right](X_1 \otimes \cdots \otimes X_{n-1})= A^1_N (X_1 \otimes I_N) A^2_N (X_2 \otimes I_N) \cdots (X_{n-1} \otimes I_N) A^n_N,$$
where
$$A^1_N = [a^1_1(A_1) \ a^1_2(A_1) \ldots a^1_N(A_1)] \colon \ell_2^N(\mathcal{H}) \rightarrow \mathcal{H},$$

$$A^i_N = [a^i_{kl}(A_i)]_{\begin{subarray}{l}
1 \leq k \leq N\\ 1 \leq l \leq N
\end{subarray}} \colon \ell_2^N(\mathcal{H}) \rightarrow \ell_2^N(\mathcal{H}), \ 2 \leq i \leq n-1$$
and
$$A^n_N = [a^n_1(A_n) \ a^n_2(A_n) \ldots A^n_N(A_n)]^t \colon \mathcal{H} \rightarrow \ell_2^N(\mathcal{H}).$$
The notation $X \otimes I_N$ stands for the element of $\mathcal{B}(\ell_2^N(\mathcal{H}))$ whose matrix is the $N\times N$ diagonal matrix $\text{diag}(X, \ldots, X)$.

For any $N\geq 1$ and any $1\leq i \leq n$, let $\pi_N$ and $\pi_i$ be the $*-$representations defined by
\begin{equation*}
\begin{array}[t]{lrcl}
\pi_N \colon &\mathcal{B}(\mathcal{H}) & \longrightarrow & \mathcal{B}(\ell_2^N(\mathcal{H}))\\
&  X & \longmapsto & X\otimes I_N \end{array} \ \ \text{and} \ \
\begin{array}[t]{lrcl}
\pi_{A_i} \colon &L^{\infty}(\lambda_{A_i}) & \longrightarrow & \mathcal{B}(\mathcal{H})\\
&  f & \longmapsto & f(A_i) \end{array}.
\end{equation*}
By \cite[Proposition 1.5]{PisierCB}, $\pi_N$ and $\pi_{A_i}$ are completely bounded with cb-norm less than $1$. Note that the element
$[a^i_{kl}]_{1 \leq k,l \leq N} \in M_N(L^{\infty}(\lambda_B))$
has a norm less than $\|a_i\|_{\infty}$. Thus, the latter implies that
$A^i_N = [\pi_{A_i}(a^i_{kl})]_{1 \leq k,l \leq N}$
has an operator norm less than $\|a_i\|_{\infty}$. Similarly (using column and row matrices), we show that $A^1_N$ and $A^n_N$ have a norm less than $\|a_1\|_{\infty}$ and $\|a_n\|_{\infty}$, respectively. Finally, write
$$\left[\Gamma^{A_1, \ldots, A_n}(\phi_N)\right](X_1 \otimes \cdots \otimes X_{n-1}) = \sigma^1_N(X_1) \sigma^2_N(X_2) \ldots \sigma^{n-1}_N(X_{n-1}),$$
where for any $1 \leq i \leq n-2$, $\sigma^i_N(X_1) = A^i_N \pi_N(X_i)$ and $\sigma_{n-1}^N(X_{n-1}) = A^{n-1}_N \pi_N(X_{n-1}) A^n_N$. By the easy part of Wittstock theorem (see e.g. \cite[Theorem 1.6]{PisierCB}), $\sigma^i_N$ and $\sigma_{n-1}^N$ are completely bounded with cb-norm less than $\|a_i\|_{\infty}$ and $\|a_{n-1}\|_{\infty} \|a_n\|_{\infty}$, respectively. Hence, by Theorem $\ref{DualityH}$, we get that $\Gamma^{A_1, \ldots, A_n}(\phi_N)$ is completely bounded with cb-norm less than $\|a_1\|_{\infty} \ldots \|a_n\|_{\infty}$.
\end{proof}

\vskip 0.5cm
\noindent
{\bf Acknowledgements.} The author is supported by NSFC (11801573).

\end{document}